\newtheorem{Theorem}{Theorem}[section]
\newtheorem{Lemma}[Theorem]{Lemma}
\newtheorem{Proposition}[Theorem]{Proposition}
\newtheorem{Definition}[Theorem]{Definition}
\newtheorem{Corollary}[Theorem]{Corollary}
\theoremstyle{definition}\newtheorem{Remark}[Theorem]{Remark}
\newtheorem{Example}[Theorem]{Example}
\DeclareMathOperator{\Ker}{Ker}
\DeclareMathOperator{\Coker}{Coker}
\DeclareMathOperator{\coker}{coker}
\DeclareMathOperator{\Act}{Act}
\DeclareMathOperator{\USGA}{USGA}
\newcommand{\C}{\ensuremath{\mathcal{C}}}
\newcommand{\LACC}{{\rm (LACC)}}
\newcommand{\CS}{{\rm (CS)}}
\newcommand{\SH}{{\rm (SH)}}
\newcommand{\Gp}{\ensuremath{\mathsf{Gp}}}
\newcommand{\Pt}{\ensuremath{\mathsf{Pt}}}
\newcommand{\RG}{\ensuremath{\mathsf{RG}}}
\newcommand{\PXMod}{\ensuremath{\mathsf{PXMod}}}
\newcommand{\PX}{\ensuremath{{_\mathsf{PX}}}}
\newcommand{\XMod}{\ensuremath{\mathsf{XMod}}}
\newcommand{\plus}{\ensuremath{+_\PX}}
\newcommand{\farf}{\mathrel{\text{\scalebox{.85}{$\Join$}}}}
\newcommand{\dense}{\ensuremath{\mathsf{PXMod}_B(\C)}}
\newcommand{\boxslash}{\mathrel{\vcenter{\hbox{\hskip.1ex$\Box$}\vskip-3.2ex\hbox{\rotatebox{-8}{$\smallsetminus$}}}}}
\newcommand{\blslash}{\reflectbox{$\boxslash$}}
\newcommand{\brslash}{$\boxslash$}
\newcommand{\btimes}{\scalebox{.85}{$\boxtimes$}}
\newcommand{\twcl}{\text{$\,\raisebox{-.2ex}{\rotatebox{45}{$\scriptstyle\boxminus$}}\,$}}
\newcommand{\twcr}{\text{$\,\raisebox{-.2ex}{\reflectbox{\rotatebox{45}{$\scriptstyle\boxminus$}}}\,$}}
\newcommand{\twc}{\text{$\,\raisebox{-.2ex}{\rotatebox{45}{$\scriptstyle\boxplus$}}\,$}}
\newcommand{\twcpxl}{\text{$\,\raisebox{-.05ex}{\blslash}\,$}}
\newcommand{\twcpxr}{\text{$\,\raisebox{-.05ex}{\brslash}\,$}}
\newcommand{\twcpx}{\text{$\;\raisebox{.05ex}\btimes\;$}}
\begin{document}

\newenvironment{changemargin}[2]{\begin{list}{}{
\setlength{\topsep}{0pt}
\setlength{\leftmargin}{0pt}
\setlength{\rightmargin}{0pt}
\setlength{\listparindent}{\parindent}
\setlength{\itemindent}{\parindent}
\setlength{\parsep}{0pt plus 1pt}
\addtolength{\leftmargin}{#1}\addtolength{\rightmargin}{#2}
}\item}{\end{list}}

\title{Peiffer product and Peiffer commutator for internal pre-crossed modules}
\author{Alan S. Cigoli, Sandra Mantovani and Giuseppe Metere}

\maketitle

\begin{abstract}
In this work we introduce the notions of Peiffer product and Peiffer commutator of internal pre-crossed modules over a fixed object $B$, extending the corresponding classical notions to any semi-abelian category $\C$. We prove that, under mild additional assumptions on $\C$, crossed modules are characterized as those pre-crossed modules $X$ whose Peiffer commutator $\langle X,X \rangle$ is trivial. Furthermore we provide suitable conditions on $\C$ (fulfilled by a large class of algebraic varietes, including among others groups, associative algebras, Lie and Leibniz algebras) under which the Peiffer product realizes the coproduct in the category of crossed modules over $B$.
\end{abstract}


\section{Introduction}

Let $B$ be a group. A pre-crossed module $(X,\delta)$ over $B$ is a group homomorphism $\delta\colon X \to B$ together with a (left) action of $B$ on $X$ such that, for $b$ in $B$ and $x$ in $X$:
$$ \delta(^b x)=b\delta(x)b^{-1} \,. $$
A pre-crossed module is called crossed module if, in addition, the following \emph{Peiffer identity} holds for any $x$, $x'$ in $X$:
$$ ^{\delta x}x' = xx'x^{-1} \,. $$
Crossed modules have been introduced by Whitehead in \cite{Whi46} while working on homotopy groups (see also \cite{Whi41} and \cite{Peiffer}). Thereafter, they have proved to be a useful technical tool in many areas of mathematics, including homotopy theory and homological algebra. An extensive recollection of results concerning crossed modules can be found in the on-going project \cite{Menage}, while a detailed account on their applications in algebraic topology is in \cite{NAT}.

Pre-crossed $B$-modules organize in a category $\PXMod_B(\Gp)$ where crossed modules form a full reflective subcategory
$$ \xymatrix{\PXMod_B(\Gp) \ar@<1ex>[r] \ar@{}[r]|-\bot & \XMod_B(\Gp) \ar@<1ex>[l]} $$
The reflection of a pre-crossed module $X$ is obtained by imposing the Peiffer identities. The kernel of this quotient is known as the \emph{Peiffer subgroup} of $X$ and denoted by $\langle X,X \rangle$.

In order to describe algebraic invariants of relevant topological constructions, it is natural to study the corresponding notions in the category of crossed modules, as Brown does for coproducts in \cite{BrownCCM}. Namely, given two pre-crossed modules $X$ and $Y$ over the same group $B$, the free product $X \ast Y$ of the two groups naturally inherits a structure of pre-crossed $B$-module that makes it a coproduct of $X$ and $Y$ in the category $\PXMod_B(\Gp)$. Whenever $X$ and $Y$ are crossed modules, their coproduct in $\XMod_B(\Gp)$ is obtained as the reflection of $X \ast Y$. In the above cited paper \cite{BrownCCM}, Brown provides an alternative construction, by considering first the semidirect product $X \rtimes Y$ as a pre-crossed $B$-module and then reflecting it into $\XMod_B(\Gp)$. As observed by Gilbert and Higgins in \cite{GilHig}, this coproduct is actually given by what they call the \emph{Peiffer product} of $X$ and $Y$, denoted by $X \farf Y$, which they define more generally for two groups acting on each other in a compatible way. Explicitly, $X \farf Y$ is obtained from $X \ast Y$ by quotienting out the Peiffer words
$$ xyx^{-1}(^{\delta x}y)^{-1} \,, \qquad yxy^{-1}(^{\delta y}x)^{-1} \,, $$
with $x$ in $X$ and $y$ in $Y$.

More generally, in $\PXMod_B(\Gp)$, for $X$ and $Y$ pre-crossed submodules of a given pre-crossed module $A$, following the work \cite{CE} by Conduch\'e and Ellis, one can form the subgroup of $A$ generated by the Peiffer words defined as above. This is known as the \emph{Peiffer commutator} of $X$ and $Y$, and it is denoted by $\langle X,Y \rangle$. Of course, the Peiffer subgroup cited above is an instance of the latter. Let us observe that the Peiffer commutator generalizes the classical commutator of groups, which is recovered when $B$ is the trivial group. A calculus of Peiffer commutators and some applications to homology can be found in \cite{CE,BCPeiff}.

The problem of studying the coproduct construction is faced also for crossed modules of Lie algebras and associative algebras by means of  similar methods respectively in \cite{CL2000} and \cite{Shammu}. On the other hand, in any semi-abelian variety (including the cases above, together with many others, such as Leibniz, Jordan, Poisson algebras), by using a categorical approach, Everaert and Gran introduce in \cite{EG:XMod} a notion of Peiffer commutator $\langle X, A \rangle$ for the special case where $X$ is a normal pre-crossed submodule of $A$.

The aim of this work is to develop a structural approach to the study of Peiffer products and Peiffer commutators extending these notions to the case of internal pre-crossed and crossed modules in a semi-abelian category \C\ (see \cite{Janelidze}). As a result, we prove that, in any semi-abelian category satisfying the condition \SH\ (see e.g. \cite{MFVdL}), the reflection of pre-crossed modules onto crossed modules can be computed by means of the quotient over the Peiffer commutator.

Our approach reveals that the generalization of the Peiffer commutator and Peiffer product to the semi-abelian context is far from being a mere translation of the corresponding notions for groups, since in general the necessary constructions have to be performed in the category \dense\ rather than in \C.

This observation naturally leads to the task of detecting assumptions under which these new notions can be described directly  in the base category. The property of \C\ being \emph{algebraically coherent} \cite{CGrayVdL2} turns out to be crucial in this sense. A further assumption is taken in Section \ref{sec:coproduct} in order to prove that the Peiffer product yields the coproduct in the category of internal crossed $B$-modules. All of these results apply, for example, to \emph{categories of interest} in the sense of Orzech \cite{Orzech}, such as Lie algebras over a field $k$, rings and associative algebras in general, Poisson algebras, Leibniz algebras among others.

The paper is organized as follows. In Section \ref{sec:PXMod} we describe the category of internal pre-crossed modules over a fixed object $B$ and its equivalence with internal reflexive graphs over $B$. Moreover, we provide the description of some relevant limits and colimits in \dense\ in terms of universal constructions in the base category. Peiffer product and Peiffer commutator of internal pre-crossed $B$-modules are introduced in Section \ref{sec:Peiffer}. In Section \ref{sec:alg.coh} we reformulate, in the algebraically coherent context, the definitions given in Section \ref{sec:Peiffer} and we provide examples of explicit calculation of the Peiffer commutator. The last section is devoted to the proof of the fact that, under suitable assumptions, the Peiffer product is the coproduct in $\XMod_B(\C)$.


\section{The category \dense} \label{sec:PXMod}

In this section we recall the definition of internal pre-crossed module in a semi-abelian category \C. For the related notions and terminology, we refer to \cite{MaMe10-1}.

An internal pre-crossed module in \C\ is an arrow $\delta \colon X \to B$, together with an internal action  $\xi$ of $B$ on $X$ (see \cite{BJK}), making the following diagram commute:
$$ \xymatrix{
    B \flat X \ar[r]^{1 \flat \delta} \ar[d]_{\xi} & B \flat B \ar[d]^{\chi} \\
    X \ar[r]_{\delta} & B
} $$
where $\chi$ is the conjugation action of $B$ on itself.

The purpose of this section is to study some basic constructions in the category \dense\ of pre-crossed modules in \C\ with codomain a fixed object $B$. We widely use the equivalence between internal pre-crossed modules and internal reflexive graphs in \C\ (see \cite{Janelidze}), which associates with every reflexive graph:
$$ \xymatrix{
    X_1 \ar@<.8ex>[r]^{d} \ar@<-.8ex>[r]_{c} & B \ar[l]|{e}
} $$
its normalization $\delta \colon X \stackrel{c \cdot \ker (d)}{\longrightarrow} B$, equipped with the conjugation action of $B$ on $X$ computed in $X_1$. Conversely, every pre-crossed module $(\delta \colon X \to B, \xi)$ yields a reflexive graph:
$$ \xymatrix{
    X \rtimes B \ar@<.8ex>[r]^-{p} \ar@<-.8ex>[r]_-{[\delta,1\rangle} & B \ar[l]|-{i}
} $$
where $p$ and $i$ are the canonical projection and inclusion respectively and $[\delta,1\rangle$ is the unique arrow making the following diagram commute:
$$ \xymatrix{
    X \ar[r]^-{j} \ar[dr]_{\delta} & X \rtimes B \ar@{-->}[d]^(.4){[\delta,1\rangle} & B \ar[l]_-{i} \ar[dl]^{1} \\
    & B
} $$
Thanks to the equivalence mentioned above, every categorical construction in the sequentiable (in the sense of Bourn \cite{Bourn2001}, see \cite{EG:XMod}) category $\RG_B(\C)$ of internal reflexive graphs in \C\ with a given object of objects $B$ has a counterpart in \dense. We are going to describe how the main constructions we need for the present paper (factorizations, some limits and colimits) can be performed directly in the category \dense\ by means of constructions in the base category \C.

\subsection{The (regular epi, mono) factorization}

Let $f_1$ be an arrow in $\RG_B(\C)$. Its (regular epi, mono) factorization can be obtained by means of the (regular epi, mono) factorization $(q_1,m_1)$ of $f_1$ as an arrow in \C:
$$ \xymatrix@=7ex{
    X_1 \ar@<1ex>[d]^{d} \ar@<-1ex>[d]_{c} \ar@{->>}[r]^{q_1}
        & \bullet \ar@<1.5ex>[d]^{d'm_1} \ar@<-1.5ex>[d]_{c'm_1} \ar@{ >->}[r]^{m_1}
        & Y_1 \ar@<1ex>[d]^{d'} \ar@<-1ex>[d]_{c'} \\
    B \ar[u]|{e} \ar[r]_{1} & B \ar[u]|{q_1e} \ar[r]_{1} & B \ar[u]|{e'}
} $$
Indeed, $m_1$ is obviously a monomorphism in $\RG_B(\C)$ and $q_1$ is the coequalizer in $\RG_B(\C)$ of the following pair of morphisms:
$$ \xymatrix@=7ex{
    R \ar@<.5ex>[r]^{r_1} \ar@<-.5ex>[r]_{r_2} \ar@<2ex>[d]^{dr_1} \ar@<-2ex>[d]_{cr_1}
        & X_1 \ar@<-1ex>[d]_{c} \ar@<1ex>[d]^{d} \\
    B \ar[u]|{\langle e,e \rangle} \ar[r]_{1} & B \ar[u]|{e}
} $$
where $(R,r_1,r_2)$ is the kernel pair of $q_1$.

By equivalence, we have a (regular epi, mono) factorization in \dense\ by taking the restrictions $(q,m)$ to the kernels. Moreover, $q$ is a regular epimorphism in \C\ being a pullback of $q_1$, while $m$ is obviously a monomorphism. Thus the following result holds:

\begin{Proposition}
Every morphism in \dense\ has a (regular epi, mono) factorization which can be obtained by means of the (regular epi, mono) factorization of the corresponding arrow in \C:
$$ \xymatrix@=7ex{
    X \ar@{ ->>}[r]^-{q} \ar[dr]_{\delta_X} & \bullet \ar[d]^(.4){\delta_Y m} \ar@{ >->}[r]^{m}
        & Y \ar[dl]^{\delta_Y} \\
    & B
} $$
\end{Proposition}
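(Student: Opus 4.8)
The plan is to leverage the equivalence between $\dense$ and $\RG_B(\C)$ that has just been set up, transporting the factorization constructed in the reflexive-graph category back to pre-crossed modules by normalization. Concretely, I would start from a morphism $f$ in $\dense$, pass to the corresponding morphism $f_1$ in $\RG_B(\C)$, and use the preceding display-level construction of its $(\text{regular epi}, \text{mono})$ factorization $f_1 = m_1 q_1$ in $\RG_B(\C)$. The content of that construction — that $m_1$ is a monomorphism and $q_1$ is the relevant coequalizer in $\RG_B(\C)$, obtained from the $(\text{regular epi}, \text{mono})$ factorization in $\C$ — is exactly what the text has already verified, so I would treat it as given.

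The next step is to normalize. Under the equivalence, normalization sends a morphism of reflexive graphs to the induced map on kernels of the domain maps $d$, $d'$, $d'm_1$. So I would restrict $q_1$ and $m_1$ to these kernels, obtaining $q \colon X \to \bullet$ and $m \colon \bullet \to Y$ with $\delta_X = \delta_Y m\, q$ factoring through the object in the middle, yielding precisely the triangle displayed in the statement. Because normalization is (one half of) an equivalence of categories, it is exact enough to preserve the relevant factorization data; in particular it carries monomorphisms to monomorphisms, which immediately gives that $m$ is a monomorphism in $\dense$.

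The one point that genuinely needs an argument — and the main obstacle — is showing that $q$ is a \emph{regular} epimorphism in $\dense$, i.e. that regularity survives the passage to kernels. Here I would invoke the observation already flagged in the running text: the restriction $q$ sits as a pullback of $q_1$ along the kernel inclusions. Since $q_1$ is a regular epimorphism in $\C$ (being the first leg of a $(\text{regular epi}, \text{mono})$ factorization there) and regular epimorphisms are pullback-stable in a semi-abelian category $\C$, it follows that $q$ is a regular epimorphism in $\C$. Then, because a morphism in $\dense$ is a monomorphism (resp. regular epimorphism) exactly when its underlying $\C$-arrow is, I conclude that $(q,m)$ is a $(\text{regular epi}, \text{mono})$ factorization in $\dense$.

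To finish, I would remark that the factorization thus obtained is canonical: the $(\text{regular epi}, \text{mono})$ factorization system is unique up to isomorphism in any category possessing it, and since $\C$ is semi-abelian and the equivalence with $\RG_B(\C)$ transports the system, the factorization produced in $\dense$ is the one induced by the factorization in $\C$ as displayed. This makes the compatibility with the base category — the real content of the statement — explicit and completes the proof.
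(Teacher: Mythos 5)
Your proposal is correct and takes essentially the same route as the paper: it transports the (regular epi, mono) factorization already constructed in $\RG_B(\C)$ through the equivalence by restricting to kernels, observes that $m$ is a monomorphism, and uses pullback-stability of regular epimorphisms to conclude that $q$ is a regular epimorphism in \C. The only superfluous step is your appeal to the unproven claim that a morphism of \dense\ is a regular epimorphism exactly when its underlying \C-arrow is: this is not needed, because the equivalence preserves regular epimorphisms just as it preserves monomorphisms (which is all the paper invokes), the pullback argument serving only to identify the transported factorization with the one computed in \C.
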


We state here the following lemma, which will be used later on.

\begin{Lemma} \label{lem:3outof2}
Let $X$, $Y$ and $Z$ be pre-crossed $B$-modules in \C, $f\colon X \to Y$ and $g\colon Y \to Z$ arrows in \C\ such that the composite $g \cdot f$ is a pre-crossed module morphism. Then
\begin{enumerate}
    \item if $g$ is a monomorphism in \dense, $f$ is also a pre-crossed module morphism;
    \item if $f$ is a regular epimorphism in \dense, $g$ is also a pre-crossed module morphism.
\end{enumerate}
\end{Lemma}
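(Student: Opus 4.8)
The plan is to reduce the statement to the cancellability of $g$ and $f$ as bare \C-arrows, once the two defining equations of a pre-crossed module morphism are made explicit. Recall that a \C-arrow $h\colon (U,\delta_U,\xi_U)\to(V,\delta_V,\xi_V)$ is a morphism in \dense\ precisely when it lies over $B$, i.e. $\delta_V\cdot h=\delta_U$, and is equivariant, i.e. $h\cdot\xi_U=\xi_V\cdot(1\flat h)$. The argument then rests on one preliminary fact, which I would read off the preceding Proposition: the underlying \C-arrow of a monomorphism in \dense\ is a monomorphism, and that of a regular epimorphism is a regular epimorphism (hence an epimorphism). Indeed, the Proposition exhibits the (regular epi, mono) factorization of \dense\ as the one computed in \C; since in such a factorization system the mono part of a regular epimorphism, and the regular-epi part of a monomorphism, are isomorphisms, factoring $g$ (resp. $f$) and using that a functor preserves isomorphisms gives the claim.

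For item (1), suppose $g$ is a monomorphism in \dense. The compatibility over $B$ for $f$ is immediate, $\delta_Y\cdot f=\delta_Z\cdot g\cdot f=\delta_X$, using that $g$ and $g\cdot f$ lie over $B$. For equivariance I would combine the equivariance of $g\cdot f$ with that of $g$, together with the functoriality $1\flat(g\cdot f)=(1\flat g)\cdot(1\flat f)$, to get
\[
 g\cdot(f\cdot\xi_X)=\xi_Z\cdot(1\flat g)\cdot(1\flat f)=g\cdot\bigl(\xi_Y\cdot(1\flat f)\bigr).
\]
As the underlying \C-arrow of $g$ is monic, I cancel it on the left and obtain $f\cdot\xi_X=\xi_Y\cdot(1\flat f)$, so $f$ is a pre-crossed module morphism.

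For item (2), suppose $f$ is a regular epimorphism in \dense; here both equations for $g$ come by right cancellation. Over $B$, from $\delta_Z\cdot g\cdot f=\delta_X=\delta_Y\cdot f$ and the fact that the underlying \C-arrow of $f$ is an epimorphism, I cancel $f$ to get $\delta_Z\cdot g=\delta_Y$. For equivariance, combining the equivariance of $g\cdot f$ and of $f$ yields
\[
 \bigl(g\cdot\xi_Y\bigr)\cdot(1\flat f)=\xi_Z\cdot(1\flat g)\cdot(1\flat f),
\]
and cancelling $1\flat f$ gives $g\cdot\xi_Y=\xi_Z\cdot(1\flat g)$.

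The one genuinely category-specific point — and the step I expect to be the main obstacle — is this last cancellation: to cancel $1\flat f$ I need it to be an epimorphism, which does not follow from $f$ being epic by itself. I would supply it by the standard fact that in a semi-abelian category the functor $B\flat(-)$ preserves regular epimorphisms, so that $1\flat f$ is again a regular (in particular ordinary) epimorphism once $f$ is one. This is exactly why item (2) must be phrased for regular epimorphisms rather than for arbitrary epimorphisms, and it is the only input beyond the bookkeeping of the two defining equations.
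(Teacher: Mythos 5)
Your proof is correct and follows essentially the same route as the paper's: both handle the slice-over-$B$ equations by direct cancellation and then use the equivariance rectangle $B\flat X \to B\flat Y \to B\flat Z$ over $X \to Y \to Z$, cancelling on the left by the monomorphism $g$ in case (1) and on the right by $1\flat f$ in case (2), with the key input that $B\flat(-)$ preserves regular epimorphisms. Your preliminary observation that monomorphisms and regular epimorphisms in \dense\ have monic, respectively regular epic, underlying \C-arrows is left implicit in the paper but is justified exactly as you do, via the (regular epi, mono) factorization being computed in \C.
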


\begin{proof}
It is easy to prove that in both cases $f$ and $g$ are morphisms in the slice category $\C\downarrow B$. It remains to prove that they are equivariant. Let us consider the following diagram, where the outer square commutes since the composite $g\cdot f$ is a pre-crossed module morphism:
$$ \xymatrix{
    B \flat X \ar[r]^{1\flat f} \ar[d]_{\xi_X} & B \flat Y \ar[r]^{1 \flat g} \ar[d]_{\xi_Y}
        & B \flat Z \ar[d]^{\xi_Z} \\
    X \ar[r]_f & Y \ar[r]_g & Z
} $$
When $g$ is a monomorphism of pre-crossed modules, the right hand square commutes and, by cancellation, the same holds for the one on the left. When $f$ is a regular epimorphism of pre-crossed modules, the left hand square commutes and, moreover, $1\flat f$ is also a regular epimorphism since these are preserved by the functor $B \flat -$ (see e.g.\ \cite{MaMe10-2}). Again, by cancellation, the right hand square commutes as desired.
\end{proof}

\subsection{Limits} \label{sec:limits}

It is well known that the category $\RG_B(\C)$ has pullbacks that are computed in the following way. Given a cospan in $\RG_B(\C)$:
$$ \xymatrix@=7ex{
    X_1 \ar@<1ex>[d]^{d} \ar@<-1ex>[d]_{c} \ar[r]^{f_1} & \bullet \ar@<1ex>[d] \ar@<-1ex>[d]
        & Y_1 \ar@<1ex>[d]^{d'} \ar@<-1ex>[d]_{c'} \ar[l]_{g_1} \\
    B \ar[u]|{e} \ar[r]_{1} & B \ar[u] \ar[r]_{1} & B \ar[u]|{e'}
} $$
it is easy to see that the pullback $X_1 \times_{(f_1,g_1)} Y_1$ is endowed with a reflexive graph structure over $B$. Since the kernel functor $\Ker_B \colon \Pt_B(\C) \to \C$ preserves limits, the restriction of this pullback to the kernels of the domain projections is again a pullback, yielding the following result.

\begin{Proposition}
Every cospan in \dense\ has a pullback which can be obtained by means of the pullback of the corresponding arrows in \C:
$$ \xymatrix{
    & X \times_{(f,g)} Y \ar[dl] \ar[rr] \ar[ddr]^(.35){\delta_X \times_\delta \delta_Y}
        & & Y \ar[ddl]^{\delta_Y} \ar[dl]_{g} \\
    X \ar[drr]_{\delta_X} \ar[rr]^(.4)f & & \bullet \ar[d]_(.4)\delta \\
    & & B
} $$
\end{Proposition}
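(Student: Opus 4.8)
The plan is to transport the problem across the equivalence $\dense\simeq\RG_B(\C)$ and to exploit the two facts recalled just above: that every pullback in $\RG_B(\C)$ is carried by the pullback of the objects of arrows taken in \C, and that the kernel functor $\Ker_B\colon\Pt_B(\C)\to\C$ preserves limits. Concretely, given a cospan $X\xrightarrow{f}A\xleftarrow{g}Y$ in \dense, I would first send it through the equivalence to a cospan of reflexive graphs $X_1\xrightarrow{f_1}A_1\xleftarrow{g_1}Y_1$ over $B$, whose pullback is the reflexive graph carried by $P_1=X_1\times_{(f_1,g_1)}Y_1$, with domain and codomain projections induced componentwise and section $\langle e_X,e_Y\rangle$.

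Next I would observe that, forgetting the codomain map, the normalization half of the equivalence is exactly $\Ker_B$ applied to the underlying point $(X_1,d_X,e_X)$, since the normalization of $X_1\rightrightarrows B$ is precisely $X=\Ker(d_X)$. Along the domain projections, the pullback square in $\RG_B(\C)$ restricts to the pullback of the points $(X_1,d_X,e_X)$ and $(Y_1,d_Y,e_Y)$ over $(A_1,d_A,e_A)$ in $\Pt_B(\C)$. Applying $\Ker_B$, which preserves this pullback, then produces a pullback square in \C\ whose apex is $\Ker_B(P_1)=X\times_{(f,g)}Y$, the ordinary pullback of $f$ and $g$ in \C. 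Its structure map is the arrow $\delta_X\times_\delta\delta_Y\colon X\times_{(f,g)}Y\to B$ induced by $\delta_X$ and $\delta_Y$, which agree on the pullback because $\delta\cdot f=\delta_X$ and $\delta\cdot g=\delta_Y$; the action is the one transported from $P_1$.

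The argument carries essentially no serious obstacle, being bookkeeping around the equivalence. The only point deserving attention is that the two projections $X\times_{(f,g)}Y\to X$ and $X\times_{(f,g)}Y\to Y$ must be genuine morphisms in \dense\ (in particular $B$-equivariant) and must enjoy the universal property in \dense, not merely in \C. Both come for free: since the equivalence and $\Ker_B$ both preserve limits, the cone constructed above is the image of the honest pullback cone of $P_1$ in $\RG_B(\C)$, so its universal property in \dense\ and the equivariance of its legs are inherited with no separate verification.
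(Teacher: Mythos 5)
Your proof is correct and follows essentially the same route as the paper: compute the pullback levelwise in $\RG_B(\C)$, observe that forgetting codomain maps it lives over $\Pt_B(\C)$, and use the fact that $\Ker_B\colon\Pt_B(\C)\to\C$ preserves limits to conclude that the restriction to the kernels of the domain projections is the pullback of $f$ and $g$ in \C. The transport of the universal property and equivariance through the equivalence $\dense\simeq\RG_B(\C)$ is exactly the implicit final step of the paper's argument.
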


Applying the last proposition to the particular case of a pullback along the initial map, we can describe explicitly how a kernel is computed in \dense.

\begin{Proposition}
A kernel diagram in \dense\ is a diagram of the following type:
$$ \xymatrix{
    \Ker(f) \ar@{ |>->}[r]^-{\ker(f)} \ar[dr]_0 & X \ar[d]^(.4){\delta_X} \ar[r]^{f} & Y \ar[dl]^{\delta_Y} \\
    & B
} $$
where the action of $B$ on $\Ker(f)$ is the restriction of the one on $X$.
\end{Proposition}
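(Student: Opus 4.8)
The plan is to recognise the kernel in \dense\ as a pullback and then invoke the preceding Proposition on pullbacks. First I would pin down the initial object of \dense: via the equivalence with $\RG_B(\C)$ it is the trivial reflexive graph $B \rightrightarrows B$ with all structure maps equal to $1_B$, whose normalization is the pre-crossed module $(0 \xrightarrow{0} B)$ carrying the (necessarily trivial) action of $B$. Note that \dense\ is \emph{not} pointed (its terminal object is $(1_B\colon B\to B)$ with the conjugation action), so the relevant notion here is the one available in the sequentiable setting: the kernel of $f\colon X\to Y$ is the pullback of $f$ along the initial map $\iota\colon (0\to B)\to Y$, whose underlying arrow in \C\ is the unique map $0 \to Y$.

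Next I would apply the preceding Proposition to the cospan $X \xrightarrow{f} Y \xleftarrow{\iota} (0\to B)$. It computes this pullback by means of the corresponding pullback in \C, namely the pullback of $f$ along $0 \to Y$; but this is exactly the kernel $\ker(f)\colon \Ker(f) \to X$ formed in \C. Hence the underlying object of the kernel in \dense\ is $\Ker(f)$, with monomorphism $\ker(f)$ into $X$. For the structure map, the Proposition gives $\delta_{\Ker(f)} = \delta_X \cdot \ker(f)$; since $f$ is a morphism over $B$ we have $\delta_X = \delta_Y \cdot f$, whence $\delta_X \cdot \ker(f) = \delta_Y \cdot f \cdot \ker(f) = 0$, which accounts for the arrow labelled $0$ in the statement.

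It then remains to identify the action. Here I would use that the pullback projection $\ker(f)$ is a morphism in \dense, hence equivariant; being moreover a monomorphism, the square expressing its equivariance exhibits $\xi_{\Ker(f)}$ as the restriction of $\xi_X$ along $\ker(f)$. The hard part will be precisely this restriction claim, i.e. checking that $\xi_X \cdot (1 \flat \ker(f))$ factors through $\ker(f)$. Post-composing with $f$ and using equivariance of $f$ gives $f \cdot \xi_X \cdot (1\flat \ker(f)) = \xi_Y \cdot \bigl(1 \flat (f\cdot \ker(f))\bigr) = 0$, since $f\cdot\ker(f)=0$ and $B\flat-$ preserves zero morphisms. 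As $\ker(f)$ is the kernel of $f$ in \C, the arrow $\xi_X \cdot (1\flat \ker(f))$ therefore factors uniquely through it, yielding the restricted action $\xi_{\Ker(f)}$. Everything else is a direct transcription of the pullback Proposition to the initial-map cospan.
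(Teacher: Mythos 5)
Your proposal is correct and follows exactly the paper's route: the paper derives this Proposition by ``applying the last proposition to the particular case of a pullback along the initial map'' of \dense, which is precisely your argument, with the identification of the initial object $(0\to B)$, the computation $\delta_X\cdot\ker(f)=0$, and the verification that $\xi_X\cdot(1\flat\ker(f))$ factors through $\ker(f)$ filled in as details the paper leaves implicit.
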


The following result essentially depends on the characterization of kernels in $\RG_B(\C)$, which is a special case of Proposition 6.2.1 in \cite{Borceux-Bourn}:

\begin{Proposition} \label{prop:normal}
The kernels in the category \dense\ are precisely the arrows of the form:
$$ \xymatrix{
    K \ar@{ |>->}[r]^k \ar[dr]_0 & X \ar[d]^{\delta} \\
    & B
} $$
where $k$ is a kernel in \C\ and the action of $B$ on $X$ passes to the quotient $X/K$.
\end{Proposition}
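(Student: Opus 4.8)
The plan is to prove the two inclusions separately, using the explicit description of kernels in \dense\ obtained in the previous proposition, so that we never have to leave the base category \C. Throughout, write $q\colon X\to X/K$ for the cokernel of $k$ in \C, and recall that, since \C\ is semi-abelian, a normal monomorphism coincides with the kernel of its own cokernel; thus $k$ being a kernel in \C\ is equivalent to $k=\ker_\C(q)$.

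First I would treat the necessity of the two conditions. Assume $k=\ker(f)$ in \dense\ for some morphism $f\colon X\to Y$. The explicit kernel description shows that the underlying arrow of $k$ is $\ker_\C(f)$, whence $k$ is a kernel in \C. For the second condition I use that $f$ is $B$-equivariant: the regular image of $f$ is a $B$-stable subobject of $Y$, since $\xi_Y\,(1\flat f)=f\,\xi_X$ factors through $f$, so it inherits an action from $Y$; the canonical isomorphism $X/K\cong\mathrm{Im}(f)$ then transports this to an action $\bar\xi$ on $X/K$ for which $q$ is equivariant. Hence the action of $B$ on $X$ passes to $X/K$, and $\delta_X k=\delta_Y f k=0$ supplies the remaining triangle.

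For the converse I would construct an explicit cokernel witnessing that $k$ is a kernel. Set $Y:=X/K$. Since $\delta_X k=0$, the map $\delta_X$ factors uniquely as $\delta_Y q=\delta_X$, and by hypothesis the action descends to an action $\bar\xi$ on $Y$ with $\bar\xi\,(1\flat q)=q\,\xi_X$. To see that $(\delta_Y,\bar\xi)$ is a pre-crossed module I would precompose the desired identity $\delta_Y\bar\xi=\chi\,(1\flat\delta_Y)$ with $1\flat q$: using the identities just recorded together with the pre-crossed module axiom for $X$ one obtains $\delta_Y\bar\xi\,(1\flat q)=\chi\,(1\flat\delta_Y)\,(1\flat q)$, and since $B\flat-$ preserves regular epimorphisms, $1\flat q$ is a regular epi and may be cancelled. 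Thus $Y$ is a pre-crossed $B$-module and $q$ is a morphism in \dense. Finally, the explicit kernel description computes $\ker_\dense(q)$ as $\ker_\C(q)=k$, with $B$ acting by restriction, which is exactly the action $K$ inherits from $X$; hence $k$ is a kernel in \dense.

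The main obstacle is the backward direction, and more precisely the verification that the quotient carries a genuine pre-crossed module structure: this is where one needs both that the action passes to the quotient (to even define $\bar\xi$) and that $B\flat-$ preserves regular epimorphisms (to cancel $1\flat q$ and deduce the compatibility axiom). The same point is what the cited characterization of kernels in $\RG_B(\C)$, a special case of Proposition 6.2.1 in \cite{Borceux-Bourn}, encapsulates once everything is transported through the equivalence $\dense\simeq\RG_B(\C)$; I would invoke that statement as the abstract counterpart of the cancellation argument sketched above. The remaining steps — well-definedness of $\delta_Y$, equivariance of $q$, and the identification $\ker_\C(q)=k$ — are routine consequences of $k$ being a kernel in \C.
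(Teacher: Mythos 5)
Your proof is correct, but it takes a genuinely different route from the paper's. The paper gives no direct argument at all: it justifies the proposition by transporting the known characterization of kernels in $\RG_B(\C)$ (a special case of Proposition 6.2.1 in \cite{Borceux-Bourn}) through the equivalence $\RG_B(\C)\simeq\dense$. You instead verify both implications directly, never leaving the base category: for necessity, writing $f=m\cdot q$ with $q$ a regular epimorphism and $m$ a monomorphism, the equality $\xi_Y(1\flat m)(1\flat q)=m\,(q\,\xi_X)$ admits a diagonal fill-in $\bar\xi$ because $1\flat q$ is a strong epimorphism (here you use that $B\flat-$ preserves regular epimorphisms) and $m$ is a monomorphism; this, together with the fact that in a semi-abelian category every regular epimorphism is the cokernel of its kernel (so $\mathrm{Im}(f)\cong X/K$), gives the descended action --- note that your phrase ``factors through $f$'' really means ``factors through $m$''. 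For sufficiency, cancelling the regular epimorphism $1\flat q$ establishes the pre-crossed module axiom for $X/K$, and $k=\ker(\coker(k))=\ker(q)$ closes the argument via the explicit kernel description in \dense. Two points you leave implicit are routine but worth recording: the descended map $\bar\xi$ automatically satisfies the internal-action axioms (cancel the monomorphism $m$ in the forward direction, and the regular epimorphisms $1\flat q$ and $1\flat(1\flat q)$ in the backward one), and its uniqueness justifies speaking of ``the'' induced action. As for what each approach buys: the paper's citation is economical and consistent with its systematic use of the reflexive-graph equivalence, whereas your argument is self-contained, stays in \C\ throughout, and makes visible exactly where each of the two hypotheses (normality of $k$ in \C, descent of the action) enters.
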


In a semi-abelian category \C\ which is strongly protomodular the above condition on the $B$-action comes for free, so it suffices to ask that $k$ is a kernel. In fact, in the semi-abelian context, the condition ``every action which restricts to a kernel passes to the quotient'' is equivalent to strong protomodularity (see \cite{MeQoa} for details).
\medskip

\subsection{Colimits} \label{sec:colimits}

While limit constructions are quite easy in \dense, colimits can be rather complicated to describe, unless we assume some additional conditions on the base category.

It is well known that a pushout in $\RG_B(\C)$ is constructed by means of levelwise pushouts in \C. In general, it may not be easy to compute pushouts directly in \dense. An exception is given by the cokernel of a kernel. Following Bourn \cite{Bourn2001}, in the sequentiable context, we call cokernel of $k\colon X \to Y$ the pushout of $k$ along the unique arrow $X\to 0$ (if it exists, as in the case of kernels). Thanks to Proposition \ref{prop:normal}, it is easy to prove the following result.

\begin{Proposition}
The cokernel of a kernel $k$ in \dense\ is a diagram of the following type (the action of $B$ on $\Coker(k)$ is induced by the one on $X$):
$$ \xymatrix@C=7ex{
    K \ar@{ |>->}[r]^-{k} \ar[dr]_0 & X \ar[d]^{\delta} \ar@{ -|>}[r]^-{\coker(k)} & \Coker(k) \ar[dl]^{\delta'} \\
    & B
} $$
\end{Proposition}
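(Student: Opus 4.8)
The plan is to construct the cokernel directly in $\C$ and then transport the pre-crossed structure, verifying the universal property at the end with the help of Lemma \ref{lem:3outof2}. First I would extract the two basic ingredients from Proposition \ref{prop:normal}. Since $k\colon K \to X$ is a kernel in \dense, that proposition tells us on the one hand that $k$ is already a kernel in \C, so the cokernel $q=\coker(k)\colon X \to X/K$ exists in \C, and on the other hand that the action $\xi$ of $B$ on $X$ passes to the quotient, yielding an induced action $\bar\xi\colon B\flat(X/K)\to X/K$ characterized by $\bar\xi\cdot(1\flat q)=q\cdot\xi$. Moreover, the defining triangle of $k$ in \dense\ forces $\delta\cdot k=0$, so $\delta$ factors uniquely through $q$ as $\delta=\delta'\cdot q$. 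This gives all the data in the statement.

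Next I would check that $(X/K,\delta',\bar\xi)$ is a pre-crossed module, i.e.\ that the pre-crossed square for $\delta'$ commutes. The idea is to precompose the two candidate composites $\delta'\cdot\bar\xi$ and $\chi\cdot(1\flat\delta')$ with $1\flat q$; using the identities $\bar\xi\cdot(1\flat q)=q\cdot\xi$ and $\delta'\cdot q=\delta$ together with the pre-crossed square already valid for $X$, the two composites coincide, and since $q$ is a regular epimorphism and $B\flat-$ preserves regular epimorphisms, $1\flat q$ is a regular epimorphism and can be cancelled. The same two identities show at once that $q$ is a morphism in \dense\ (it lies over $B$ and is equivariant); being a regular epimorphism in \C, it is then a regular epimorphism in \dense\ by the (regular epi, mono) factorization described above.

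Finally, for the universal property of the pushout of $k$ along $K\to 0$: given a pre-crossed module $Z$ and a morphism $f\colon X\to Z$ in \dense\ with $f\cdot k=0$, the fact that $q$ is the cokernel of $k$ in \C\ produces a unique $\C$-arrow $\bar f\colon X/K\to Z$ with $\bar f\cdot q=f$, and it only remains to see that $\bar f$ is itself a pre-crossed module morphism. This is immediate from Lemma \ref{lem:3outof2}(2), since $\bar f\cdot q=f$ is a pre-crossed module morphism and $q$ is a regular epimorphism in \dense. The genuinely delicate point, namely the well-definedness of the quotient action $\bar\xi$, is the part that would be the main obstacle in an arbitrary semi-abelian category; here, however, it is handed to us for free by Proposition \ref{prop:normal}, so the whole argument reduces to repeatedly cancelling the regular epimorphisms $q$ and $1\flat q$.
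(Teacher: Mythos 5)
Your proof is correct and follows essentially the same route as the paper, which gives no written argument for this statement beyond the remark that it ``is easy to prove'' thanks to Proposition \ref{prop:normal}: like the paper, you take from that proposition both that $k$ is a kernel in \C\ and that the action passes to the quotient, and then obtain the pre-crossed structure on $\Coker(k)$ and its universal property by cancelling the regular epimorphisms $q$ and $1\flat q$ (the latter step via Lemma \ref{lem:3outof2}). Nothing is missing; your write-up simply makes explicit the routine verifications the paper leaves to the reader.
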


Another manageable colimit is the pushout of two regular epimorphisms.

\begin{Proposition}
The pushout of two regular epimorphisms in \dense\ can be obtained
by means of the pushout of the corresponding arrows in \C:
$$ \xymatrix{
    \bullet \ar@{->>}[dr]^f \ar@{->>}[rr]^g \ar@/_/[ddr]_{\delta} & & Y \ar[ddl]^(.3){\delta_Y} \ar@{->>}[dr] \\
    & X \ar[d]_(.3){\delta_X} \ar@{->>}[rr] & & X +_{(f,g)} Y \ar[dll]^{\delta_X+_{\delta}\delta_Y} \\
    & B
} $$
\end{Proposition}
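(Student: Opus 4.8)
The plan is to transport the problem through the equivalence $\dense \simeq \RG_B(\C)$ and to exploit the fact, recalled above, that pushouts in $\RG_B(\C)$ are computed levelwise. Writing $A$ for the common domain (the top-left vertex) and using the semidirect product description of the associated reflexive graphs, the two regular epimorphisms $f$ and $g$ correspond to morphisms $f_1 = f \rtimes 1$ and $g_1 = g \rtimes 1$ among $A \rtimes B$, $X \rtimes B$ and $Y \rtimes B$; since $f$ and $g$ are regular epimorphisms in \C\ (by the factorization Proposition), so are $f_1$ and $g_1$. The levelwise pushout keeps $B$ at the level of objects and produces, at the level of arrows, the pushout $P_1 = (X \rtimes B) +_{(f_1,g_1)} (Y \rtimes B)$ computed in \C, equipped with the induced domain map $d_P \colon P_1 \to B$ and its canonical section. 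By the equivalence, the pushout in \dense\ is the normalization $\Ker(d_P)$, so everything reduces to identifying this kernel with the pushout $X +_{(f,g)} Y$ taken in \C.

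To carry out this identification I would argue as follows. A direct computation shows that $\Ker(f_1)$ and $\Ker(g_1)$, as subobjects of $A \rtimes B$, coincide with the kernels $K_f = \Ker(f)$ and $K_g = \Ker(g)$ sitting inside the normal subobject $A = \Ker(d_A)$. Since the pushout of a pair of regular epimorphisms is the quotient by the join of their kernels, one has $P_1 = (A \rtimes B)/(K_f \vee_{A \rtimes B} K_g)$, the join being taken among normal subobjects of $A \rtimes B$; as this join is contained in $A$, the usual diagram lemma identifies $\Ker(d_P)$ with $A/(K_f \vee_{A \rtimes B} K_g)$. On the other hand, and for the same reason, $X +_{(f,g)} Y = A/(K_f \vee_A K_g)$, where now the join is computed among normal subobjects of $A$. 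The whole statement therefore hinges on the coincidence $K_f \vee_A K_g = K_f \vee_{A \rtimes B} K_g$.

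This coincidence is the point I expect to require the most care, and it is where the semi-abelian hypothesis on \C\ enters. The argument I have in mind is that for normal subobjects the join is stable: factoring the map $K_f + K_g \to A \hookrightarrow A \rtimes B$ induced by the two inclusions shows that the subobject generated by $K_f$ and $K_g$ is the same whether formed in $A$ or in $A \rtimes B$, the inclusion $A \hookrightarrow A \rtimes B$ being a monomorphism. Moreover this generated subobject is already normal in both $A$ and $A \rtimes B$: the image of a normal subobject along a regular epimorphism is normal and the preimage of a normal subobject is normal, so applying this to the quotient $A \rtimes B \to (A \rtimes B)/K_f$ exhibits $K_f \vee_{A \rtimes B} K_g$ as a normal subobject contained in $A$, and performing the same computation relative to $A$ yields the very same subobject. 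Hence the two joins agree and $\Ker(d_P) \cong X +_{(f,g)} Y$.

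Finally I would record that this isomorphism is one of pre-crossed $B$-modules. It is by construction a morphism in $\C \downarrow B$, and its equivariance follows because the fact that $K_f \vee K_g$ is normal in $A \rtimes B$ says precisely that this join is stable under the $B$-action; thus by Proposition \ref{prop:normal} the action on $A$ passes to the quotient $A/(K_f \vee K_g)$ and endows the pushout with the expected pre-crossed module structure. The reflexivity and codomain maps of $P_1$ are likewise induced, which completes the verification that the displayed diagram is the pushout in \dense.
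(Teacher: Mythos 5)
Your argument is correct in substance, but it takes a genuinely different route from the paper's. The paper's own proof is two lines: it invokes Lemma 1.1 of \cite{BG}, which (in any exact sequentiable category, hence both in \C\ and in \dense) characterizes the commutative squares of regular epimorphisms that are pushouts as exactly those inducing a regular epimorphism on kernels; since kernels and regular epimorphisms in \dense\ are computed as in \C, the pushout square in \dense, read in \C, satisfies the criterion there and so is also a pushout in \C. You instead transport the problem through the equivalence with $\RG_B(\C)$, form the levelwise pushout of the semidirect products, and identify its normalization with the pushout in \C\ by exhibiting both as quotients of the common domain $A$ by a join of kernels. This is more computational but viable, and it buys an explicit description of the pushout as $A/(K_f \vee K_g)$ which the paper's abstract argument does not provide; in exchange, the paper's proof avoids all bookkeeping and reuses machinery (kernels and regular epis in \dense\ being ``the same'' as in \C) already set up. Two of your steps should be cited rather than re-derived loosely, although both are true in the semi-abelian setting, so there is no actual gap: first, that the plain subobject join of two normal subobjects is itself normal, so that the subobject join and the join in the lattice of normal subobjects coincide --- this is Corollary 4.3.15 of \cite{Borceux-Bourn}, the very result whose \dense-analogue is Proposition \ref{prop:join.norm}; second, your identification $q^{-1}(q(K_g)) = K_f \vee K_g$ for $q \colon A \rtimes B \to (A \rtimes B)/K_f$, which relies on the bijection between normal subobjects of a quotient and normal subobjects of the domain containing the kernel. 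With the first fact cited, your third paragraph collapses to one line: $K_f$ and $K_g$ are kernels both in $A$ and in $A \rtimes B$, their subobject join is the same in both (the image of $K_f + K_g$ composed with the monomorphism $A \rightarrowtail A \rtimes B$), and that join is then normal in both, so the two normal joins agree.
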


\begin{proof}
Thanks to Lemma 1.1 in \cite{BG}, both in \C\ and in \dense, a
commutative square of regular epimorphisms is a pushout if and
only if it induces a regular epimorphic restriction to the
kernels. Since the kernels in \dense\ are computed as in \C, the
pushout in \dense\ coincides with the pushout of the corresponding
arrows in \C.
\end{proof}

\dense\ being exact and sequentiable, the following proposition can be proved likewise the analogue one in the semi-abelian context (see Corollary 4.3.15 in \cite{Borceux-Bourn}).

\begin{Proposition} \label{prop:join.norm}
In \dense, the join of two kernels is the kernel of the diagonal of the pushout of the corresponding cokernels.
\end{Proposition}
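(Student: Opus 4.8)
The plan is to transpose the proof of the analogous statement in the semi-abelian setting (Corollary 4.3.15 in \cite{Borceux-Bourn}) to \dense, checking that every construction it invokes has been made available by the preceding propositions of this section. Fix two kernels $k_1\colon K_1 \to X$ and $k_2\colon K_2 \to X$ in \dense, and denote by $c_i = \coker(k_i)\colon X \to \Coker(k_i)$ their cokernels, which exist and are regular epimorphisms by the description of the cokernel of a kernel. Since $c_1$ and $c_2$ are regular epimorphisms sharing the domain $X$, their pushout $P$ is computed as in the Proposition on pushouts of two regular epimorphisms; write $p_i\colon \Coker(k_i) \to P$ for the two pushout morphisms and set $d = p_1 c_1 = p_2 c_2\colon X \to P$ for the diagonal. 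The goal is to prove that $\ker(d) = K_1 \vee K_2$, where $\vee$ denotes the join in the lattice of kernels of $X$, which exists because \dense\ is exact and sequentiable.

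First I would prove $K_1 \vee K_2 \le \ker(d)$. From $d\,k_1 = p_1 c_1 k_1 = 0$ and $d\,k_2 = p_2 c_2 k_2 = 0$ it follows that both $K_1$ and $K_2$ factor through $\ker(d)$. As $\ker(d)$ is itself a kernel in \dense\ and the join is by definition the least kernel containing $K_1$ and $K_2$, this yields the desired inclusion.

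For the reverse inclusion I would combine the universal property of the pushout with the identity $\ker(\coker(N)) = N$, valid for a kernel $N$ in any exact sequentiable category. Put $q = \coker(K_1 \vee K_2)\colon X \to X/(K_1 \vee K_2)$. Since $K_1 \le K_1 \vee K_2$ and $K_2 \le K_1 \vee K_2$, the morphism $q$ annihilates both $k_1$ and $k_2$, so by the universal properties of the cokernels $c_1$ and $c_2$ it factors as $q = q_1' c_1 = q_2' c_2$. The universal property of the pushout $P$ then produces a unique $\phi\colon P \to X/(K_1 \vee K_2)$ with $\phi p_i = q_i'$, whence $\phi\,d = q$. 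Consequently $\ker(d) \le \ker(\phi\,d) = \ker(q) = K_1 \vee K_2$, the last equality being exactly the fact that a kernel is the kernel of its own cokernel. The two inclusions give $\ker(d) = K_1 \vee K_2$.

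The point deserving attention, and the reason the statement is not a mere formality, is that each of these operations — kernels, cokernels of kernels, the pushout of $c_1$ and $c_2$, and the correspondence $\ker(\coker(N)) = N$ on normal subobjects — must genuinely take place inside \dense\ and not only in \C. This is precisely what the earlier propositions secure: kernels and cokernels of kernels in \dense\ are computed as in \C\ with the induced $B$-action, and the pushout of two regular epimorphisms in \dense\ is again computed as in \C. Granting these, together with the exactness and sequentiability of \dense, the semi-abelian argument transfers without change.
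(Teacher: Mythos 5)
Your proof is correct and is essentially the paper's own argument: the paper establishes this proposition simply by invoking Corollary 4.3.15 of \cite{Borceux-Bourn}, the transposition to \dense\ being legitimate precisely because \dense\ is exact and sequentiable and its kernels, cokernels of kernels, and pushouts of regular epimorphisms have just been shown to be computed as in \C\ --- which is exactly the checklist you work through. One cosmetic improvement: run your second inclusion against an arbitrary kernel $W$ containing $K_1$ and $K_2$ (rather than against $K_1 \vee K_2$ itself); the same factorization of $\coker(W)$ through the pushout then shows $\ker(d)$ is the least such kernel, so the existence of the join is obtained for free instead of being assumed from exactness and sequentiability.
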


\begin{Corollary} \label{cor:joinPX.ker}
The join of two kernels in \dense\ can be obtained by means of the join of the corresponding arrows in \C.
\end{Corollary}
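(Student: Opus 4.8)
The plan is to derive this corollary directly from Proposition \ref{prop:join.norm} by chaining together, step by step, the computational results established earlier in this subsection. Proposition \ref{prop:join.norm} describes the join of two kernels in \dense\ as the kernel of the diagonal of the pushout of their cokernels; since this same recipe computes the join of two kernels in the semi-abelian category \C\ (this is exactly Corollary 4.3.15 in \cite{Borceux-Bourn}), it suffices to check that each of the three ingredients of the recipe — forming the cokernels, taking their pushout, and taking the kernel of the diagonal — is performed in \dense\ by means of the corresponding construction in \C.

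Concretely, I would start from two kernels $k_1\colon K_1 \to X$ and $k_2\colon K_2 \to X$ in \dense\ and form their cokernels $\coker(k_1)$ and $\coker(k_2)$. By the Proposition on the cokernel of a kernel, the underlying objects $\Coker(k_1)$ and $\Coker(k_2)$ are computed as the cokernels in \C, and both $\coker(k_i)$ are regular epimorphisms. I would then take the pushout of the span $(\coker(k_1),\coker(k_2))$: since both legs are regular epimorphisms, the Proposition on the pushout of two regular epimorphisms applies, so this pushout is obtained from the corresponding pushout in \C. The two cofactors into the pushout object $P$ are again regular epimorphisms, so the diagonal $X \to \Coker(k_1) \to P$ is a composite of regular epimorphisms, hence itself a regular epimorphism whose underlying arrow is computed in \C. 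Finally, taking the kernel of this diagonal and invoking the Proposition describing kernels in \dense, its underlying object is the kernel computed in \C.

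Putting these together, the underlying object of the join in \dense\ is obtained from the underlying kernels by exactly the recipe of Corollary 4.3.15 in \cite{Borceux-Bourn}, namely the kernel of the diagonal of the pushout of the two cokernels in \C; that is, it is the join of the corresponding arrows in \C, with $B$-action inherited at each stage since every intermediate object is a sub- or quotient-construction of $X$. The only point that needs a moment's care is the claim that the diagonal of the pushout is a regular epimorphism — so that the kernel Proposition applies cleanly — but this is automatic, as a pushout of regular epimorphisms again yields regular epimorphisms on its remaining edges. I therefore expect no genuine obstacle: the work is entirely in verifying that the forgetful passage to \C\ commutes with each construction appearing in Proposition \ref{prop:join.norm}.
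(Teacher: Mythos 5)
Your proposal is correct and follows exactly the route the paper intends: the corollary is stated without proof precisely because it is the immediate consequence of Proposition \ref{prop:join.norm} combined with the preceding propositions showing that cokernels of kernels, pushouts of regular epimorphisms, and kernels in \dense\ are all computed as in \C, matched against Corollary 4.3.15 of \cite{Borceux-Bourn} for the join in \C. Your only extra worry --- that the diagonal must be a regular epimorphism for the kernel proposition to apply --- is unnecessary, since kernels in \dense\ are computed as in \C\ for arbitrary morphisms, but this does not affect the argument.
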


Let us consider now the coproduct construction. The coproduct of two internal reflexive graphs $(X_1,d,c,e)$ and $(Y_1,d',c',e')$ in $\RG_B(\C)$ is given by the pushout of the corresponding initial maps:
$$ \xymatrix@=4ex{
    & 0 \ar[dl] \ar[rr] \ar@{ |>->}[dd] & & X \ar[dl]^f \ar@{ |>->}[dd] \\
    Y \ar@{ |>->}[dd] \ar[rr]^(.6)g & & X \plus Y \ar@{ |>->}[dd] \\
    & B \ar[dl]_{e'} \ar[rr]^(.3)e \ar@<-1ex>[dd]_(.3)1 \ar@<1ex>[dd]^(.3)1
        & & X_1 \ar[dl] \ar@<-1ex>[dd]_{c} \ar@<1ex>[dd]^{d} \\
    Y_1 \ar@<1ex>[dd]^{d'} \ar@<-1ex>[dd]_{c'} \ar[rr]
        & & X_1+_{(e,e')}Y_1 \ar@<1ex>[dd]^(.3){[d,d']} \ar@<-1ex>[dd]_(.3){[c,c']} \\
    & B \ar[uu]|(.7)1 \ar[rr]_(.3)1 \ar[dl]^(.4)1 & & B \ar[uu]|{e} \ar[dl]^1 \\
    B \ar[uu]|{e'} \ar[rr]_1 & & B \ar[uu]
} $$
But the induced square on the kernels of the domain projections is not in general a pushout in \C, so that $X \plus Y$ needs not coincide with the coproduct $X+Y$ in \C\ (see Proposition 6.2 in \cite{Gray12}). It does happen when the base category is \LACC\ (i.e.\ locally algebraically cartesian closed, see \cite{Gray12}), which implies that the kernel functor
$$ \Ker_B \colon \Pt_B(\C) \to \C $$
preserves colimits. However, while groups and Lie algebras are
examples of \LACC\ categories, many other important algebraic
varieties are not (e.g.\ the category of rings, as shown in
\cite{Gray12}). In Section \ref{sec:alg.coh}, we will use the
weaker condition \CS\ requiring the canonical arrow
$$ \sigma \colon X+Y \to X \plus Y $$
to be a regular epimorphism. In a semi-abelian context, this condition is equivalent to \emph{algebraic coherence} in the sense of \cite{CGrayVdL2} (see Proposition \ref{prop:ac.cs} below).

\begin{Remark} \label{rem:sigma.epi}
Notice that in any case the  comparison arrow $\sigma$ is  cancellable on the right with respect to morphisms in \dense, i.e.\ if $f$ and $g$ in \dense\ are such that $f \cdot \sigma = g\cdot \sigma$, then $f=g$.
\end{Remark}


\section{Peiffer product and Peiffer commutator} \label{sec:Peiffer}

\subsection{Definitions and properties} \label{sec:definitions}

\begin{Lemma} \label{lemma:pb.rg}
Let $(\delta\colon X \to B, \xi)$ be a pre-crossed module and $f \colon A \to B$ a morphism in \C. The pullback of $\delta$ along $f$ is endowed with a pre-crossed module structure.
\end{Lemma}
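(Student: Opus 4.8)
The plan is to construct, on the pullback $P = A \times_B X$ (with projections $\pi_A \colon P \to A$ and $\pi_X \colon P \to X$), an internal action of $A$ making $\pi_A \colon P \to A$ a pre-crossed $A$-module. Conceptually this is just the change-of-base along $f$: the reflexive graph over $B$ associated with $(\delta,\xi)$ pulls back to a reflexive graph over $A$ whose normalization is $P$, so the pre-crossed structure is forced by the equivalence of Section \ref{sec:PXMod}. It is quicker, however, to produce the action directly and read off the defining identity. First I would define the candidate action $\xi_P \colon A \flat P \to P$ through the universal property of the pullback, prescribing its two components: the $A$-component as the conjugation action $\chi_A$ of $A$ on itself restricted along $\pi_A$, namely $\chi_A \cdot (1 \flat \pi_A)$, and the $X$-component as the action of $A$ on $X$ obtained by restricting $\xi$ along $f$, namely $\xi \cdot (f \flat 1) \cdot (1 \flat \pi_X)$.

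The one genuine verification is that these two components are compatible, that is
\[ f \cdot \chi_A \cdot (1\flat \pi_A) = \delta \cdot \xi \cdot (f \flat 1) \cdot (1 \flat \pi_X), \]
so that the pair factors through $P$ and $\xi_P$ exists. Here I would chain together three facts: the naturality of the conjugation action (any morphism $f$ in \C\ satisfies $f \cdot \chi_A = \chi \cdot (f \flat f)$, where $\chi = \chi_B$), the commutativity $f \cdot \pi_A = \delta \cdot \pi_X$ of the pullback square, and finally the pre-crossed module identity $\delta \cdot \xi = \chi \cdot (1 \flat \delta)$ for $X$; the bifunctoriality of $\flat$ lets me match the intermediate terms at each step. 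This is the step I expect to carry the whole argument.

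It then remains to check that $\xi_P$ is indeed an internal action and that $\pi_A$ satisfies the pre-crossed condition. The latter is immediate and by design: composing the defining equation of $\xi_P$ with $\pi_A$ returns exactly $\chi_A \cdot (1 \flat \pi_A)$, which is precisely the required identity $\pi_A \cdot \xi_P = \chi_A \cdot (1 \flat \pi_A)$ for $(P,\pi_A,\xi_P)$. For the action axioms (unit and associativity), since the pair $(\pi_A,\pi_X)$ is jointly monic, I would verify each axiom after post-composition with $\pi_A$ and with $\pi_X$ separately; both reduce to the corresponding axioms for the conjugation action $\chi_A$ and for the restricted action $\xi \cdot (f \flat 1)$, which hold because these are genuine actions and $\flat$ is functorial. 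Beyond the compatibility computation no serious obstacle arises; the only point requiring care is the systematic use of the bifunctoriality of $\flat$ to move the various morphisms past one another.
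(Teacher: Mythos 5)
Your proposal is correct and is essentially the paper's own argument: the action you define via the universal property of the pullback, with components $\chi_A \cdot (1 \flat \pi_A)$ and $\xi \cdot (f \flat 1) \cdot (1 \flat \pi_X) = \xi \cdot (f \flat \pi_X)$, is exactly the paper's induced action $\overline{\xi}$, and your compatibility computation is precisely the commutativity of the outer part of the paper's defining diagram. The only difference is one of emphasis: the paper delegates the remaining verifications (action axioms, pre-crossed condition) to the cited standard fact about pullbacks of internal reflexive graphs, whereas you carry them out directly by composing with the jointly monic pair $(\pi_A, \pi_X)$.
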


\begin{proof}
This is a pre-crossed module version of a standard fact about internal reflexive graphs and fully faithful morphisms between them (see \cite{AMMV} for the crossed module case). We just recall here the construction of the induced action: it is the (unique) arrow $\overline{\xi}$ making the diagram below commute.
$$ \xymatrix{
    A \flat (A \times_{(f,\delta)} X) \ar[dr]^-{\overline{\xi}} \ar[rr]^-{f \flat \overline{f}}
        \ar[dd]_{1 \flat \overline{\delta}} & & B \flat X \ar[d]^{\xi} \\
    & A \times_{(f,\delta)} X \ar[d]_{\overline{\delta}} \ar[r]^-{\overline{f}} & X \ar[d]^{\delta} \\
    A \flat A \ar[r]_-{\chi} & A \ar[r]_-f & B
} $$
\end{proof}

Let $(\delta_X \colon X \to B, \xi_X)$ and $(\delta_Y \colon Y \to B, \xi_Y)$ be pre-crossed modules in \C. Then $X$ and $Y$ act on each other by means of the following actions:
$$ \begin{array}{rl}
    \delta^*_Y\xi_X \colon & \!\!\!\!\xymatrix{Y \flat X \ar[r]^-{\delta_Y\flat 1}
        & B \flat X \ar[r]^-{\xi_X} & X} \\
    \delta^*_X\xi_Y \colon & \!\!\!\!\xymatrix{X \flat Y \ar[r]^-{\delta_X\flat 1}
        & B \flat Y \ar[r]^-{\xi_Y} & Y}
\end{array} $$

\begin{Proposition} \label{prop:semidir.PX}
There exists a (unique) arrow $[\delta_X,\delta_Y\rangle \colon X \rtimes Y \to B$, making the following diagram commute
$$ \xymatrix@=7ex{
    X \ar[r]^-{j_X} \ar[dr]_{\delta_X} & X \rtimes Y \ar@{-->}[d]^(.35){[\delta_X,\delta_Y\rangle}
        & Y \ar[l]_-{i_Y} \ar[dl]^{\delta_Y} \\
    & B
} $$
and it is endowed with a pre-crossed module structure, such that $j_X$ and $i_Y$ are morphisms in \dense.
\end{Proposition}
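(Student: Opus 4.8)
The plan is to exploit the equivalence between pre-crossed modules and reflexive graphs over $B$, constructing the structure at the level of graphs and then transporting it via normalization. Concretely, I would first recognize that the semidirect product $X \rtimes Y$ is formed using the action $\delta^*_X\xi_Y$ of $X$ on $Y$, so the data $(X, Y, \delta^*_Y\xi_X, \delta^*_X\xi_Y)$ amounts to a pair of groups (objects) acting on each other. The existence and uniqueness of $[\delta_X,\delta_Y\rangle$ is a direct consequence of the universal property of the semidirect product: since $\delta_X$ and $\delta_Y$ agree with the conjugation structure on $B$ (both being pre-crossed module maps), the cotuple $[\delta_X,\delta_Y\rangle$ is the unique arrow restricting to $\delta_X$ along $j_X$ and to $\delta_Y$ along $i_Y$. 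This is the easy part and follows formally from $X\rtimes Y$ being a coproduct-like colimit in the category of points, or simply from the universal property displayed in the diagram.

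The substantive content is equipping $X \rtimes Y$ with a $B$-action $\xi$ making $\bigl([\delta_X,\delta_Y\rangle, \xi\bigr)$ a pre-crossed module. For this I would pass to reflexive graphs. By Proposition~\ref{prop:semidir.PX}'s counterpart in the excerpt's setup, the pre-crossed module $(X,\delta_X,\xi_X)$ corresponds to the graph $X \rtimes B \rightrightarrows B$, and similarly for $Y$. The natural guess is that $X \rtimes Y$ arises as the normalization of a suitable reflexive graph over $B$ built from these data. I would therefore describe the $B$-action on $X \rtimes Y$ explicitly as the conjugation action computed inside a larger semidirect product $(X \rtimes Y) \rtimes B$, where $B$ acts diagonally on the two factors via $\xi_X$ and $\xi_Y$. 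The key verification is that this diagonal action is compatible with the internal action $\delta^*_X\xi_Y$ used to form $X\rtimes Y$, i.e.\ that the $B$-action descends correctly to the semidirect product. This amounts to checking a compatibility (Peiffer-type) condition between the two mutual actions and the $B$-action, which holds precisely because the original actions factor through $\delta_X$ and $\delta_Y$.

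Once the action $\xi$ is in place, I would verify the pre-crossed module identity, namely that the outer square of the defining diagram (with $[\delta_X,\delta_Y\rangle$ in place of $\delta$) commutes. Using the equivalence with reflexive graphs, this reduces to checking that $[\delta_X,\delta_Y\rangle$ is the normalization of the displacement map of the graph $(X\rtimes Y)\rtimes B \rightrightarrows B$, which is automatic by construction. The final claims — that $j_X$ and $i_Y$ are morphisms in \dense\ — follow by restricting attention to each factor: on $X$ the action $\xi$ restricts to $\xi_X$ and $[\delta_X,\delta_Y\rangle$ restricts to $\delta_X$, so $j_X$ is a pre-crossed module morphism, and symmetrically for $i_Y$. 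These equivariance checks can be streamlined by invoking Lemma~\ref{lem:3outof2}, since $j_X$ and $i_Y$ are (split) monomorphisms whose composites with the structural maps are already known to be pre-crossed module morphisms.

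The main obstacle I anticipate is the compatibility verification in the second step: establishing that the diagonal $B$-action genuinely equips the semidirect product $X\rtimes Y$ — formed with the mutual internal action — with a well-defined action satisfying the pre-crossed identity. In the group case this is the routine check that conjugation in $X \rtimes Y$ by $B$ respects the internal multiplication, but internally one must argue entirely through the $\flat$-functor and the universal properties of the semidirect products involved, without recourse to elements. I would handle this by working in the category of points over $B$ and using that the relevant actions all factor through $\delta_X$ and $\delta_Y$, so that the coherence reduces to the associativity and compatibility axioms of the given actions $\xi_X$ and $\xi_Y$.
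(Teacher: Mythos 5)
Your overall picture---that the $B$-action on $X \rtimes Y$ should be the ``diagonal'' one coming from $\xi_X$ and $\xi_Y$, and that $[\delta_X,\delta_Y\rangle$ exists by the quotient description of the semidirect product---matches the content of the paper's proof, but your proposal never actually constructs that action, and this construction is the entire substance of the proposition. Internally, an action of $B$ on $X \rtimes Y$ is a morphism $B \flat (X \rtimes Y) \to X \rtimes Y$ satisfying the algebra axioms, and $B \flat (X \rtimes Y)$ does not decompose into anything built from $B\flat X$ and $B \flat Y$, so ``$B$ acts diagonally on the two factors'' is not a definition. Worse, the carrier you propose for it, the semidirect product $(X \rtimes Y) \rtimes B$, can only be formed \emph{after} an action of $B$ on $X \rtimes Y$ has been exhibited: as stated, the construction is circular, and your fallback (``work in the category of points over $B$; the coherence reduces to the axioms of $\xi_X$ and $\xi_Y$'') names the goal rather than a mechanism. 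The paper supplies exactly the missing device: since $X \rtimes Y$ is, by definition of $\delta_Y^*\xi_X$, the pullback of $\delta_Y$ along $p_B \colon X \rtimes B \to B$ (with projections $p_Y$ and $1 \rtimes \delta_Y$), Lemma \ref{lemma:pb.rg} endows $1 \rtimes \delta_Y$ with a pre-crossed module structure over $X \rtimes B$, i.e.\ an action $\overline{\xi_Y}$ of $X \rtimes B$ on $X \rtimes Y$, and the desired $B$-action is the restriction $i_B^*\overline{\xi_Y}$ along $i_B \colon B \to X \rtimes B$. Every subsequent verification (the pre-crossed identity, the equivariance of $j_X$ and $i_Y$) is then carried out against this concrete action; without it, or an equivalent construction such as building the point $X \rtimes (Y \rtimes B) \to B$ from the action $[\delta_Y,1\rangle^*\xi_X$ and identifying its kernel, there is nothing to verify against.

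Two further problems. First, you have the semidirect product backwards: in the paper's convention $X \rtimes Y$ is formed from the action $\delta_Y^*\xi_X$ of $Y$ on $X$ (so $X$ is the normal factor and $j_X$ its inclusion), not from $\delta_X^*\xi_Y$; the existence of the cotuple requires precisely $[\delta_X,\delta_Y]\cdot\kappa_{Y,X} = \delta_X \cdot \delta_Y^*\xi_X$, i.e.\ the pre-crossed module property of $\delta_X$, and with your convention you would be checking the wrong compatibility. Second, Lemma \ref{lem:3outof2} cannot be used as you suggest to streamline the claims about $j_X$ and $i_Y$: its first part requires the \emph{outer} factor $g$ of a composite $g \cdot f$ to be a monomorphism in \dense\ (hence already a pre-crossed module morphism), and the fact that $j_X$ or $i_Y$ is itself a split mono is irrelevant; moreover no suitable mono in \dense\ out of $X \rtimes Y$ is available ($1 \rtimes \delta_Y$ is monic only when $\delta_Y$ is). The paper instead proves equivariance of $j_X$ and $i_Y$ directly, by composing with the jointly monic pullback projections $(p_Y, 1 \rtimes \delta_Y)$ and computing with the defining properties of $\overline{\xi_Y}$; some such explicit verification is unavoidable, and your proposal replaces it with the assertion that ``the action $\xi$ restricts to $\xi_X$,'' which is exactly what has to be proved.
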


Notice that, in the case when $(Y,\delta_Y)=(B,1_B)$, we recover the arrow $[\delta_X,1\rangle\colon X \rtimes B \to B$ introduced in Section \ref{sec:PXMod}.

\begin{proof}
First of all we have to show that $[\delta_X,\delta_Y\rangle$ exists. This is true (and the arrow is unique) if and only if the following diagram commutes:
$$ \xymatrix@=7ex{
    Y \flat X \ar[r]^-{\kappa_{Y,X}} \ar[d]_{\delta^*_Y\xi_X} & X+Y \ar[d]^{[\delta_X,\delta_Y]} \\
    X \ar[r]_{\delta_X} & B
} $$
The latter depends on the fact that $\delta_X$ is a pre-crossed module.

Now we prove that there is an action of $B$ on $X \rtimes Y$. By definition of $\delta^*_Y\xi_X$, we can form the following pullback:
\begin{equation} \label{diag:delta.star.xi}
\begin{aligned}
\xymatrix{
    X \rtimes Y \ar[d]_{1 \rtimes \delta_Y} \ar[r]^-{p_Y} & Y \ar[d]^{\delta_Y} \\
    X \rtimes B \ar[r]_-{p_B} & B
}
\end{aligned}
\end{equation}
By Lemma \ref{lemma:pb.rg}, the arrow $1 \rtimes \delta_Y$ is a pre-crossed module, with the corresponding action $\overline{\xi_Y}$. Then $B$ acts on $X \rtimes Y$ with the action $i_B^*\overline{\xi_Y}$ defined by the following composition:
$$ \xymatrix{
    B \flat (X \rtimes Y) \ar[dr]_{i_B^*\overline{\xi_Y}} \ar[r]^-{i_B \flat 1}
        & (X \rtimes B) \flat (X \rtimes Y) \ar[d]^{\overline{\xi_Y}} \\
    & X \rtimes Y
} $$
Finally we prove that $([\delta_X,\delta_Y\rangle,i_B^*\overline{\xi_Y})$ is a pre-crossed module. It suffices to show that the following diagram commutes:
$$ \xymatrix{
    B\flat (X \rtimes Y) \ar[d]^-{i_B \flat 1} \ar@{}[drr]|{(a)}
        \ar[rrr]^-{1 \flat [\delta_X,\delta_Y\rangle} & & & B \flat B \ar[ddd]^{\chi} \\
    (X \rtimes B) \flat (X \rtimes Y) \ar[dd]^{\overline{\xi_Y}}
        \ar[rr]_{1 \flat (1 \rtimes \delta_Y)}
        & & (X \rtimes B) \flat (X \rtimes B) \ar[d]^{\chi}
        \ar[ur]^{[\delta_X,1\rangle \flat [\delta_X,1\rangle\quad} \ar@{}[dr]|{(c)} \\
    \ar@{}[rr]|{(b)} & & (X \rtimes B) \ar[dr]^{[\delta_X,1\rangle} & \\
    X \rtimes Y \ar[rrr]_{[\delta_X,\delta_Y\rangle} \ar[urr]_{1 \rtimes \delta_Y} & & & B
} $$
The commutativity of the bottom triangle follows from the uniqueness of $[\delta_X,\delta_Y\rangle$, and consequently $(a)$ commutes by functoriality of $-\flat-$. $(c)$ commutes because every morphism is equivariant with respect to the conjugation actions of its domain and codomain; the commutativity of $(b)$ depends on the fact that $1 \rtimes \delta_Y$ is a pre-crossed module.

In order to prove that $j_X \colon X \to X \rtimes Y$ is a morphism in \dense, we have to show that the following diagram commutes:
$$ \xymatrix{
    B \flat X \ar[rr]^{\xi_X} \ar[d]_{1 \flat j_X} & & X \ar[d]^{j_X} \\
    B \flat (X \rtimes Y) \ar[r]_-{i_B \flat 1}
        & (X \rtimes B) \flat (X \rtimes Y) \ar[r]_-{\overline{\xi_Y}}
        & X \rtimes Y
} $$
It suffices to compose with the jointly monic projections $(p_Y,1_X \rtimes \delta_Y)$ of the pullback (\ref{diag:delta.star.xi}). Indeed, $p_Y\cdot \overline{\xi_Y}=\xi_Y\cdot (p_B \flat p_Y)$ by definition of $\overline{\xi_Y}$, and then:
$$ p_Y \cdot \overline{\xi_Y} \cdot (i_B \flat 1) \cdot (1 \flat j_X) = \xi_Y \cdot (p_B \flat p_Y) \cdot (i_B \flat j_X) = \xi_Y \cdot (1 \flat 0) = 0 = p_Y \cdot j_X \cdot \xi_X \,. $$
On the other hand, by the commutativity of the square $(b)$ above, $(1 \rtimes \delta_Y)\cdot\overline{\xi_Y}=\chi\cdot(1\flat (1\rtimes \delta_Y))$, and then:
\begin{multline*}
    (1 \rtimes \delta_Y) \cdot \overline{\xi_Y} \cdot (i_B\flat 1) \cdot (1\flat j_X) = \chi \cdot
        (1\flat (1\rtimes \delta_Y)) \cdot (i_B\flat j_X) = \\
    = \chi \cdot (i_B \flat j_X) = j_X \cdot \xi_X = (1 \rtimes \delta_Y) \cdot j_X \cdot \xi_X \,,
\end{multline*}
where the last but one equality holds by definition of $X \rtimes B$ (in other words, the conjugation action of $B$ as subobject of $X \rtimes B$ on the normal subobject $X$ coincides with the action $\xi_X$ defining the semidirect product).

To prove that $i_Y \colon Y \to X \rtimes Y$ is also a morphism in \dense, we have to show that the following diagram commutes:
$$ \xymatrix{
    B \flat Y \ar[rr]^{\xi_Y} \ar[d]_{1 \flat i_Y} & & Y \ar[d]^{i_Y} \\
    B \flat (X \rtimes Y) \ar[r]_-{i_B \flat 1}
        & (X \rtimes B) \flat (X \rtimes Y) \ar[r]_-{\overline{\xi_Y}}
        & X \rtimes Y
} $$
As before, it suffices to compose with the projections of the pullback (\ref{diag:delta.star.xi}). Indeed:
$$ p_Y \cdot \overline{\xi_Y} \cdot(i_B \flat 1) \cdot(1 \flat i_Y) = \xi_Y \cdot (p_B \flat p_Y) \cdot (i_B \flat i_Y) = \xi_Y\cdot(1 \flat 1) = \xi_Y = p_Y \cdot i_Y \cdot \xi_Y \,, $$
and
\begin{multline*}
    (1 \rtimes \delta_Y) \cdot \overline{\xi_Y} \cdot (i_B\flat 1) \cdot (1\flat i_Y) = \chi \cdot
        (1\flat (1\rtimes \delta_Y))\cdot(i_B\flat i_Y) = \chi\cdot(i_B \flat (i_B\delta_Y)) = \\
    = \chi \cdot (i_B \flat i_B) \cdot (1 \flat \delta_Y) = i_B \cdot \chi \cdot (1 \flat \delta_Y)
        = i_B \cdot \delta_Y \cdot \xi_Y = (1 \rtimes \delta_Y) \cdot i_Y \cdot \xi_Y \,,
\end{multline*}
where in the last line we use the fact that $\delta_Y$ is a pre-crossed module.
\end{proof}

As a consequence, we get a morphism $[j_X,i_Y]_\PX\colon X \plus Y \to X \rtimes Y$ in \dense. We denote by $X \twcpxl Y$ the kernel of such an arrow in \C. As seen in Section \ref{sec:limits}, $0 \colon X \twcpxl Y \to B$ is the kernel in \dense\ of $[j_X,i_Y]_\PX$. In a symmetric way, we obtain $X \twcpxr Y$ as the kernel of $[i_X,j_Y]_\PX \colon X \plus Y \to Y \rtimes X$.

\begin{Definition} \label{def:peiff.prod}
We denote by $X \twcpx Y := (X \twcpxl Y) \vee_{\PX} (X \twcpxr Y)$ the domain of the pre-crossed module which is the join in \dense\ of these two normal subobjects of $X \plus Y$. By Proposition \ref{prop:join.norm}, this join is the kernel of the diagonal of the following pushout in \dense:
$$ \xymatrix@C=9ex{
    X \plus Y \ar[r]^-{[j_X,i_Y]_\PX} \ar[d]_{[i_X,j_Y]_\PX} & X \rtimes Y \ar[d] \\
    Y \rtimes X \ar[r] & X \farf Y
} $$
where $X \farf Y$ is what we call the (internal) \emph{Peiffer product} of $\delta_X$ and $\delta_Y$, with $\delta_{X \farf Y}$ the corresponding pre-crossed module over $B$ (see \cite{GilHig} for the original definition of Peiffer product of groups).
\end{Definition}

In the following, we will denote $l_X$ and $l_Y$ the canonical morphisms from $X$ and $Y$, respectively, to $X \farf Y$, obtained by composition of the canonical injections in $X \plus Y$ with the regular epimorphism $X \plus Y \to X \farf Y$. It follows that $(l_X,l_Y)$ is a jointly strongly epimorphic pair in \dense.

\begin{Remark}
Given two trivial pre-crossed $B$-modules associated with $X$ and $Y$ (i.e.\ $\delta=0$ with $B$ acting trivially), we have $X \rtimes Y \cong Y \rtimes X \cong X\times Y$ and consequently $X \farf Y \cong X\times Y$.
\end{Remark}

\begin{Lemma} \label{lem:pushout.C}
Let $f\colon X \to X'$ and $g\colon Y \to Y'$ be regular epimorphisms in \dense. Then the following is a pushout in \C:
$$ \xymatrix{
    X+Y \ar@{->>}[r]^-{[j,i]} \ar@{->>}[d]_{f+g} & X \rtimes Y \ar@{->>}[d]^{f \rtimes g} \\
    X'+Y' \ar@{->>}[r]_-{[j,i]} & X' \rtimes Y'
} $$
\end{Lemma}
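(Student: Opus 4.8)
The plan is to recognise the square as a pushout of regular epimorphisms and to apply the kernel criterion already used in Section~\ref{sec:colimits} (Lemma 1.1 of \cite{BG}). First I would verify that all four maps are regular epimorphisms in \C. The map $f+g$ is one because coproducts of regular epimorphisms are regular epimorphisms in a semi-abelian category. Each $[j,i]$ is a regular epimorphism because $X\rtimes Y$ is the middle object of the split extension $0\to X\to X\rtimes Y\to Y\to 0$: by protomodularity the kernel inclusion $j_X$ and the section $i_Y$ form a jointly strongly epimorphic pair, and in the regular category \C\ strong epimorphisms coincide with regular epimorphisms, so $[j,i]=[j_X,i_Y]$ is a regular epimorphism (likewise for the primed version). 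Finally $f\rtimes g$ is a regular epimorphism: the square commutes, so $(f\rtimes g)\cdot[j,i]=[j,i]\cdot(f+g)$ is a composite of regular epimorphisms, whence $f\rtimes g$ is a regular epimorphism, because if a composite $v\cdot u$ is a regular epimorphism then $v$ is a strong, hence (in \C) regular, epimorphism.

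With the square now consisting of regular epimorphisms, the cited criterion says that it is a pushout in \C\ if and only if the induced morphism on the kernels (here of the vertical maps),
\[ \overline{[j,i]}\colon \Ker(f+g)\longrightarrow\Ker(f\rtimes g)\,, \]
obtained by restricting $[j,i]$, is a regular epimorphism. Since $\Ker(f\rtimes g)$ is a subobject of $X\rtimes Y$, this restriction is a regular epimorphism exactly when its image exhausts $\Ker(f\rtimes g)$, i.e.\ when
\[ [j,i]\bigl(\Ker(f+g)\bigr)=\Ker(f\rtimes g) \]
as (normal) subobjects of $X\rtimes Y$. The inclusion from left to right is immediate, since $(f\rtimes g)\cdot[j,i]=[j,i]\cdot(f+g)$ annihilates $\Ker(f+g)$; thus the whole statement is reduced to the reverse inclusion.

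This reverse inclusion is the heart of the matter, and I expect it to be the main obstacle. To obtain it I would restrict the projection $p_Y\colon X\rtimes Y\to Y$ to $\Ker(f\rtimes g)$: using that $(f\rtimes g)\cdot j_X=j_{X'}\cdot f$ and $(f\rtimes g)\cdot i_Y=i_{Y'}\cdot g$, one checks that this yields a split extension
\[ 0\longrightarrow\Ker f\longrightarrow\Ker(f\rtimes g)\longrightarrow\Ker g\longrightarrow 0 \]
whose kernel inclusion is $j_X\cdot\ker(f)$ and whose section is $i_Y\cdot\ker(g)$. By protomodularity this pair is jointly strongly epimorphic, so it suffices to see that $j_X\cdot\ker(f)$ and $i_Y\cdot\ker(g)$ both factor through $[j,i](\Ker(f+g))$. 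This is the case because $i_X\cdot\ker(f)$ and $i_Y\cdot\ker(g)$ factor through $\Ker(f+g)\le X+Y$, while $[j,i]$ carries them to $j_X\cdot\ker(f)$ and $i_Y\cdot\ker(g)$ respectively; as $[j,i](\Ker(f+g))$ is a subobject of $X\rtimes Y$ containing both images, it contains the subobject they jointly generate, namely $\Ker(f\rtimes g)$. The delicate points to verify carefully are the exactness of the restricted sequence together with the identification of its section, and the appeal to protomodularity for the joint strong epimorphy; once these are in place, the remaining steps are formal manipulations of subobjects.
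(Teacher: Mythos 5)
Your proof is correct, but it takes a genuinely different route from the paper's. The paper argues via a commutative cube: the top and bottom faces are the squares exhibiting $X\rtimes Y$ and $X'\rtimes Y'$ as quotients of the coproducts (the pushout of $\kappa_{Y,X}$ along $\delta_Y^*\xi_X$, and its primed analogue), the rear face is a pushout because $g\flat f$ is a regular epimorphism (the functor $-\flat-$ preserves them) while $\delta_{Y'}^*\xi_{X'}$ is a split epimorphism, and then pasting and cancellation of pushouts yield the front face. You instead verify that all four sides of the square are regular epimorphisms and apply the Bourn--Gran criterion of Lemma 1.1 in \cite{BG} (already invoked in Section \ref{sec:colimits}), reducing the claim to the regular epimorphy of the restriction $\Ker(f+g)\to\Ker(f\rtimes g)$, which you deduce from the split short exact sequence of kernels $\Ker f\to\Ker(f\rtimes g)\rightleftarrows\Ker g$ and protomodularity (kernel inclusion and section are jointly strongly epimorphic). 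The points you flag as delicate do go through: the square is applied in transposed form, which is legitimate since being a pushout is invariant under transposition; and the kernel of the restricted split epimorphism is $X\wedge\Ker(f\rtimes g)=\Ker\bigl((f\rtimes g)\cdot j_X\bigr)=\Ker(j_{X'}\cdot f)=\Ker f$ because $j_{X'}$ is monic, which identifies the sequence of kernels as split exact. Comparing the two approaches: the paper's is shorter and exploits the colimit presentation of the semidirect product, needing only that $B\flat-$ preserves regular epimorphisms; yours stays within the standard semi-abelian toolkit of regular-epi squares and kernel restrictions --- essentially the same technique the paper itself uses for the pushout of two regular epimorphisms in \dense\ --- and has the side benefit of making the induced exact sequence $\Ker f\to\Ker(f\rtimes g)\to\Ker g$ explicit.
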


\begin{proof}
Let us consider the following commutative cube:
$$ \xymatrix@=4ex{
    & Y\flat X \ar[dl]_{\kappa_{Y,X}} \ar@{->>}[dd]_(.7){g\flat f} \ar[rr]^{\delta_Y^*\xi_X}
        & & X \ar@{->>}[dd]^f \ar[dl]^j \\
    X+Y \ar@{->>}[dd]_{f+g} \ar@{->>}[rr]^(.7){[j,i]} & & X \rtimes Y \ar@{->>}[dd]^(.3){f \rtimes g} \\
    & Y' \flat X' \ar[dl]^(.4){\kappa_{Y',X'}} \ar[rr]_(.35){\delta_{Y'}^*\xi_{X'}} & & X' \ar[dl]^{j} \\
    X'+Y' \ar@{->>}[rr]_-{[j,i]} & & X'\rtimes Y'
} $$
The upper and lower squares are pushouts by definition of semidirect product. $g\flat f$ being a regular epimorphism (since $-\flat -$ preserves them) and ${\delta_{Y'}^*\xi_{X'}}$ a split epimorphism,  the square on the rear and hence its composite with the lower square are pushouts. By cancellation, it follows that the square on the front is a pushout.
\end{proof}

\begin{Lemma} \label{lem:pushout.dense}
Let $f\colon X \to X'$ and $g\colon Y \to Y'$ be regular epimorphisms in \dense. Then the following is a pushout in \dense:
$$ \xymatrix{
    X\plus Y \ar@{->>}[r]^-{[j,i]_\PX} \ar@{->>}[d]_{f\plus g} & X \rtimes Y \ar@{->>}[d]^{f \rtimes g} \\
    X'\plus Y' \ar@{->>}[r]_-{[j,i]_\PX} & X' \rtimes Y'
} $$
\end{Lemma}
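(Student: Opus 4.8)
The goal is to show the displayed square is a pushout in $\dense$, given that the analogous square in $\C$ (Lemma \ref{lem:pushout.C}) is a pushout and that all four maps are regular epimorphisms. The natural plan is to invoke the Proposition earlier in the excerpt asserting that \emph{the pushout of two regular epimorphisms in $\dense$ can be computed as the pushout of the corresponding arrows in $\C$}. That proposition, together with its proof via Lemma 1.1 of \cite{BG} (a commutative square of regular epimorphisms is a pushout iff it induces a regular epimorphic restriction to the kernels, in both $\C$ and $\dense$, since kernels are computed alike), reduces the statement in $\dense$ to the statement in $\C$.

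So the first step is to verify the hypothesis of that proposition: that all four arrows in the $\dense$-square are regular epimorphisms. By assumption $f$ and $g$ are regular epimorphisms in $\dense$. For $f\plus g$ and $f\rtimes g$ I would argue that the coproduct functor $-\plus-$ and the semidirect-product functor $-\rtimes-$ on $\dense$ preserve regular epimorphisms; concretely, $f\rtimes g$ is a regular epimorphism since its underlying arrow in $\C$ is (being the upper map's counterpart, obtained from a pushout of regular epimorphisms), and regular epimorphisms in $\dense$ are detected on the underlying $\C$-arrows via the characterization of the factorization and of kernels recalled in Section \ref{sec:PXMod}. The horizontal maps $[j,i]_\PX$ are regular epimorphisms because they are built from the canonical $X+Y \twoheadrightarrow X\plus Y$ composed with the comparison to the semidirect product; alternatively, once the underlying $\C$-square of Lemma \ref{lem:pushout.C} is a pushout of regular epimorphisms, each edge is a regular epimorphism in $\C$, and hence in $\dense$.

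With all four maps regular epimorphic, the cited Proposition says the pushout in $\dense$ is computed exactly as the pushout in $\C$. But Lemma \ref{lem:pushout.C} already identifies that $\C$-pushout as the square with corner $X'\rtimes Y'$ and the displayed arrows. Since the pre-crossed module structures on the relevant objects (in particular on $X\rtimes Y$ and $X'\rtimes Y'$, via Proposition \ref{prop:semidir.PX}) are the canonical ones and the comparison maps are morphisms in $\dense$, the $\C$-pushout square underlies the $\dense$-square; by the equivalence between ``pushout in $\dense$'' and ``pushout in $\C$'' for regular-epi squares, the $\dense$-square is itself a pushout. This completes the argument.

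I expect the main obstacle to be the bookkeeping in the first step: carefully confirming that $f\rtimes g$ and $[j,i]_\PX$ are genuinely regular epimorphisms \emph{in $\dense$} and not merely surjective on underlying objects, and that the pre-crossed module structures match so that the $\C$-pushout of Lemma \ref{lem:pushout.C} really is the underlying square of the $\dense$-diagram. Once the regular-epimorphism hypothesis is secured, the reduction to Lemma \ref{lem:pushout.C} via the Proposition on pushouts of regular epimorphisms is essentially immediate, so the proof should be short.
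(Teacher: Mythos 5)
There is a genuine gap, and it sits exactly where your reduction claims to be ``essentially immediate.'' The Proposition on pushouts of two regular epimorphisms in \dense\ reduces the \dense-pushout to the pushout \emph{in \C\ of the same span}, namely the span $X' \plus Y' \stackrel{f\plus g}{\twoheadleftarrow} X \plus Y \stackrel{[j,i]_\PX}{\twoheadrightarrow} X \rtimes Y$ whose vertex is the coproduct $X \plus Y$ \emph{in \dense}. But Lemma \ref{lem:pushout.C} identifies the \C-pushout of a \emph{different} span, the one with vertex the coproduct $X+Y$ \emph{in \C}\ and left edge $f+g$. These two squares do not have the same corners: $X+Y$ and $X \plus Y$ differ in general, and they are related only by the comparison $\sigma \colon X+Y \to X \plus Y$, which (without \CS, not assumed here) is \emph{not} an epimorphism in \C; by Remark \ref{rem:sigma.epi} it is merely right-cancellable against morphisms of \dense. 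So you cannot transfer the pushout property from the $X+Y$-square to the $X\plus Y$-square by any cancellation argument in \C: pasting would require the square built on $\sigma$ and $\sigma'$ to be a pushout in \C, which is not available. Your closing remark that one must check ``the \C-pushout of Lemma \ref{lem:pushout.C} really is the underlying square of the \dense-diagram'' is not bookkeeping --- it is false as stated, and repairing it is the whole content of the lemma.

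The paper's proof does precisely this bridging by hand: given a cocone $(u,v)$ in \dense\ on the span from $X \plus Y$, it precomposes with $\sigma$ to obtain a cocone on the composite rectangle (whose outer square \emph{is} the square of Lemma \ref{lem:pushout.C}), gets a unique arrow $t$ in \C, upgrades $t$ to a morphism of \dense\ via Lemma \ref{lem:3outof2} (using that $v$ and the regular epimorphism $f \rtimes g$ are in \dense), and only then cancels $\sigma$ using Remark \ref{rem:sigma.epi}, which applies because both $t\cdot[j,i]_\PX$ and $u$ are \dense-morphisms. None of these three ingredients appears in your argument. A secondary but related slip: you justify that $[j,i]_\PX$ is a regular epimorphism via ``the canonical $X+Y \twoheadrightarrow X\plus Y$,'' i.e.\ by taking $\sigma$ to be a regular epimorphism --- that is exactly condition \CS\ (equivalently, algebraic coherence), which this lemma deliberately avoids assuming; the correct argument is that $(j,i)$ is jointly strongly epimorphic in \C\ and hence in \dense, as in the proof of Lemma \ref{lem:xmod=>peiff=0}.
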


\begin{proof}
Let us consider the diagram of solid arrows below, where $u$ and $v$ are morphisms in \dense\ such that $u\cdot(f \plus g)=v\cdot[j,i]_\PX$:
$$ \xymatrix@C=7ex{
    X+Y \ar@{}[dr]|{(a)} \ar[r]^-{\sigma} \ar@{->>}[d]_{f+g}
        & X \plus Y \ar@{}[dr]|{(b)} \ar@{->>}[d]_{f \plus g} \ar@{->>}[r]^-{[j,i]_\PX}
        & X \rtimes Y \ar@{->>}[d]_{f \rtimes g} \ar@/^/[ddr]^v \\
    X'+Y' \ar[r]_-{\sigma} & X' \plus Y' \ar@/_/[drr]_u \ar@{->>}[r]^-{[j,i]_\PX}
        & X' \rtimes Y' \ar@{-->}[dr]_t \\
    & & & \bullet
} $$
By Lemma \ref{lem:pushout.C}, the rectangle $(a)+(b)$ is a pushout in \C, hence there exists a unique $t$ in \C\ such that $t\cdot (f\rtimes g)=v$ and $t\cdot[j,i]_\PX\cdot\sigma=u\cdot\sigma$. Moreover, $t$ is a morphism in \dense\ by Lemma \ref{lem:3outof2}, since $v$ and the regular epimorphism $(f\rtimes g)$ are. Then the equality $t\cdot[j,i]_\PX=u$ follows by applying Remark \ref{rem:sigma.epi}.
\end{proof}

\begin{Proposition} \label{prop:epireg}
Let $f\colon X \to X'$ and $g\colon Y \to Y'$ be regular epimorphisms in \dense. Then the induced arrows
\begin{gather*}
    f \farf g \colon X \farf Y \to X' \farf Y' \\
    f \twcpx g \colon X \twcpx Y \to X' \twcpx Y'
\end{gather*}
are also regular epimorphisms.
\end{Proposition}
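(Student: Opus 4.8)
The plan is to deduce both statements at once from the single fact that the rectangle
$$ \xymatrix{ X\plus Y \ar[r]^-{\Delta} \ar[d]_{f\plus g} & X\farf Y \ar[d]^{f\farf g} \\ X'\plus Y' \ar[r]_-{\Delta'} & X'\farf Y' } $$
is a pushout of regular epimorphisms in \dense, where $\Delta$ and $\Delta'$ denote the diagonals of the two pushout squares defining $X\farf Y$ and $X'\farf Y'$ (Definition \ref{def:peiff.prod}). Granting this, $f\farf g$ is the pushout of the regular epimorphism $f\plus g$ along $\Delta$, and is therefore a regular epimorphism; and, since $X\twcpx Y=\Ker(\Delta)$ and $X'\twcpx Y'=\Ker(\Delta')$ by Definition \ref{def:peiff.prod} together with Proposition \ref{prop:join.norm}, the arrow $f\twcpx g$ is exactly the restriction of the rectangle to the kernels of its horizontal arrows, so that Lemma 1.1 of \cite{BG} yields that $f\twcpx g$ is a regular epimorphism as well.

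To show that the rectangle is a pushout I will factor each diagonal through the corresponding semidirect product, writing $\Delta=\pi_1\cdot[j,i]_\PX$ and $\Delta'=\pi_1'\cdot[j,i]_\PX$, where $\pi_1\colon X\rtimes Y\to X\farf Y$ and $\pi_1'$ are the coprojections of the defining pushouts. The left-hand square, with vertical edges $f\plus g$ and $f\rtimes g$, is a pushout by Lemma \ref{lem:pushout.dense}. For the right-hand square, with vertical edges $f\rtimes g$ and $f\farf g$, I regard the two defining pushout squares together with the morphism $(f\plus g,f\rtimes g,g\rtimes f)$ as a commutative cube whose bottom and top faces are pushouts; its vertical face built on $[i,j]_\PX$ is again a pushout, by Lemma \ref{lem:pushout.dense} with the roles of $X$ and $Y$ interchanged. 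Two applications of the pushout-pasting law then identify $X'\farf Y'$ with the pushout of $\pi_1$ along $f\rtimes g$, that is, they show that the right-hand square is a pushout. Hence the diagonal rectangle, being the horizontal composite of two pushouts, is itself a pushout. All of its edges are regular epimorphisms: $f\plus g$ and $[j,i]_\PX$ by Lemma \ref{lem:pushout.dense}, the coprojections $\pi_1$ and $\pi_1'$ as pushouts of the regular epimorphisms $[i,j]_\PX$, and consequently also the composites $\Delta$ and $\Delta'$.

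The main obstacle is precisely the cube/pasting step establishing that the right-hand square is a pushout: this is the only point at which one must genuinely use that the semidirect-product squares of Lemma \ref{lem:pushout.dense} are pushouts, and not merely commutative squares of regular epimorphisms. Everything else is formal, relying on the stability of regular epimorphisms under pushout, the pushout-pasting law, and the characterisation (Lemma 1.1 of \cite{BG}) of pushouts among commutative squares of regular epimorphisms in terms of a regular epimorphic restriction to the kernels.
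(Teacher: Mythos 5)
Your proposal is correct and follows essentially the same route as the paper: the paper also deduces everything from the fact that the square with horizontal arrows $[l_X,l_Y]_\PX=\Delta$ and $[l_{X'},l_{Y'}]_\PX=\Delta'$ and vertical arrows $f\plus g$ and $f\farf g$ is a pushout of regular epimorphisms in \dense\ (via Lemma \ref{lem:pushout.dense} and what it calls ``a trivial argument on composition and cancellation of pushouts'', which is exactly the cube/pasting step you spell out), and then obtains $f\twcpx g$ as the regular epimorphic restriction to kernels via Lemma 1.1 of \cite{BG}. The only cosmetic difference is that the paper gets $f\farf g$ regular epic directly by right-cancellation in the composite $[l_{X'},l_{Y'}]_\PX\cdot(f\plus g)$, whereas you invoke stability of regular epimorphisms under pushout.
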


\begin{proof}
The arrow $f \farf g$ is obviously a regular epimorphism being the last part of a composite of regular epimorphisms. By Lemma \ref{lem:pushout.dense} and a trivial argument on composition and cancellation of pushouts the following is a pushout in \dense:
$$ \xymatrix@C=9ex{
    X \plus Y \ar@{->>}[d]_{f \plus g} \ar@{->>}[r]^-{[l_X,l_Y]_\PX} & X \farf Y \ar@{->>}[d]^{f \farf g} \\
    X' \plus Y' \ar@{->>}[r]_-{[l_{X'},l_{Y'}]_\PX} & X' \farf Y'
} $$
As a consequence, the restriction to the kernels of the horizontal arrows, i.e.\ the morphism $f \twcpx g$, is a regular epimorphism. Notice that all the pushouts above are also pushouts in \C\ as observed in Section \ref{sec:colimits}.
\end{proof}

\begin{Definition} \label{def:Peiff.comm}
Let $(\delta_X \colon X \to B, \xi_X)$ and $(\delta_Y \colon Y \to B, \xi_Y)$ be subobjects of $(\delta \colon A \to B, \xi)$ in \dense:
\begin{equation} \label{diag:mn}
\begin{aligned}
\xymatrix@=7ex{
    X \ar@{ >->}[r]^-{m} \ar[dr]_{\delta_X} & A \ar[d]^{\delta} & Y \ar@{ >->}[l]_-{n} \ar[dl]^{\delta_Y} \\
    & B
}
\end{aligned}
\end{equation}
The \emph{Peiffer commutator} $\langle X,Y \rangle$ is given by the regular image of $X \twcpx Y$ through the arrow $[m,n]_\PX\colon X \plus Y \to A$:
\begin{equation} \label{diag:peiff.comm}
\begin{aligned}
\xymatrix{
    X \twcpx Y \ar@{ |>->}[d] \ar@{->>}[r] & \langle X,Y \rangle \ar@{ >->}[d] \\
    X \plus Y \ar[r]_-{[m,n]_\PX} & A
}
\end{aligned}
\end{equation}
\end{Definition}

\begin{Remark}
Since the (regular epi, mono) factorization in \dense\ is the same as in\, \C, $\langle X,Y \rangle$ turns out to be a subobject of $A$ in \dense. Its corresponding arrow on $B$ is $0 \colon \langle X,Y \rangle \to B$, since it is the image of a kernel. Hence, it is possible to compute the cokernel in \dense\ of the inclusion of $\langle X,Y \rangle$ in $A$.
\end{Remark}

\begin{Remark}
$X \twcpx Y$ is the Peiffer commutator of $X$ and $Y$ in $X \plus Y$.
\end{Remark}

From the definition of Peiffer commutator and being the Peiffer product a cokernel, the next result follows.

\begin{Proposition} \label{prop:tw.coop}
Let $X$ and $Y$ be subobject of $A$ in \dense\ as in diagram (\ref{diag:mn}). The following are equivalent:
\begin{enumerate}
    \item $\langle X,Y \rangle=0$;
    \item there exists a (necessarily unique) morphism $\varphi$ making the following diagram commute:
$$ \xymatrix@=7ex{
    X \ar[r]^-{l_X} \ar@{ >->}[dr]_{m} & X \farf Y \ar@{-->}[d]^{\varphi} & Y \ar[l]_-{l_Y} \ar@{ >->}[dl]^{n} \\
    & A
} $$
\end{enumerate}
\end{Proposition}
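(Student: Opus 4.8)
The plan is to reduce the entire statement to the universal property of a cokernel, so the first thing I would record is the single structural fact underlying everything. By Definition~\ref{def:peiff.prod}, the canonical regular epimorphism $\pi := [l_X,l_Y]_\PX \colon X \plus Y \to X \farf Y$ (the diagonal of the defining pushout) has kernel exactly $k \colon X \twcpx Y \hookrightarrow X \plus Y$. Since \dense\ is exact and sequentiable and $\pi$ is a regular epimorphism, it is the cokernel of its kernel $k$; this is precisely the sense in which the Peiffer product is a cokernel, as noted just before the statement. Thus $\pi = \coker(k)$.

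Next I would translate condition~(2) into a factorization statement about $[m,n]_\PX$. Writing $\iota_X,\iota_Y$ for the coproduct injections into $X \plus Y$, we have $l_X = \pi\cdot\iota_X$ and $l_Y = \pi\cdot\iota_Y$ (by the description of $l_X,l_Y$ following Definition~\ref{def:peiff.prod}), while $[m,n]_\PX$ is characterized by $[m,n]_\PX\cdot\iota_X = m$ and $[m,n]_\PX\cdot\iota_Y = n$. I claim that the existence of $\varphi$ as in~(2) is equivalent to the existence of a factorization $[m,n]_\PX = \varphi\cdot\pi$. Indeed, if $\varphi\cdot\pi = [m,n]_\PX$, precomposing with $\iota_X,\iota_Y$ yields $\varphi\cdot l_X = m$ and $\varphi\cdot l_Y = n$; conversely, from $\varphi\cdot l_X = m = [m,n]_\PX\cdot\iota_X$ and the analogous equality for $Y$, the joint epimorphy of $(\iota_X,\iota_Y)$ forces $\varphi\cdot\pi = [m,n]_\PX$. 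Uniqueness of $\varphi$ is immediate because $(l_X,l_Y)$ is a jointly strongly epimorphic pair (equivalently, $\pi$ is epic).

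Then I would invoke the universal property of $\pi = \coker(k)$: the morphism $[m,n]_\PX$ factors through $\pi$ if and only if $[m,n]_\PX\cdot k = 0$. Finally I would connect this to the Peiffer commutator. By Definition~\ref{def:Peiff.comm}, $\langle X,Y\rangle$ is the mono part of the (regular epi, mono) factorization of the composite $[m,n]_\PX\cdot k$, i.e.\ its regular image. Since the regular image of a morphism vanishes exactly when the morphism itself is zero, we get $\langle X,Y\rangle = 0 \iff [m,n]_\PX\cdot k = 0$. Chaining the three equivalences, namely $\langle X,Y\rangle = 0 \iff [m,n]_\PX\cdot k = 0 \iff [m,n]_\PX$ factors through $\pi \iff \varphi$ exists, delivers the proposition.

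I do not anticipate a serious obstacle: the whole argument is carried by the opening identification of $X \farf Y$ as $\coker(k)$, which is secured by Definition~\ref{def:peiff.prod} and the exact sequentiable structure of \dense. The only points demanding mild care are the bookkeeping between the two jointly epimorphic families $(\iota_X,\iota_Y)$ and $(l_X,l_Y)$ in the second step, and the (standard) observation that a regular image is trivial precisely when the underlying morphism is the zero map.
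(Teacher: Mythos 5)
Your proof is correct and is essentially the paper's own argument: the paper leaves the proposition as an immediate consequence of the remark that the Peiffer product is a cokernel (namely $\coker$ of $X \twcpx Y \rightarrowtail X \plus Y$) together with Definition~\ref{def:Peiff.comm}, and your write-up simply makes that chain of equivalences explicit. The bookkeeping with $(\iota_X,\iota_Y)$ and $(l_X,l_Y)$, the cokernel universal property, and the observation that a regular image vanishes exactly when the morphism does are all exactly the intended steps.
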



\begin{Remark}
In case of trivial pre-crossed modules associated with $X$ and $Y$, as already observed \mbox{$X \farf Y \cong X \times Y$}, $l_X=\langle 1,0 \rangle$ and $l_Y=\langle 0,1 \rangle$, and $\varphi$ is nothing but the cooperator of $f$ and $g$ in the sense of \cite{Borceux-Bourn}. As a consequence, in this case, the normal closure of the Peiffer commutator coincides with the Huq commutator.
\end{Remark}

In the category of groups, the Peiffer commutator of $X$ and $Y$ defined above is the normal closure in $X \vee Y$ of the Peiffer commutator defined by Conduch\'e and Ellis in \cite{CE}. Notice that in this case $X \plus Y = X + Y$. This is not necessarily true in a general semi-abelian category, hence the computation of the Peiffer commutator may not be  easy. As we will see in Section \ref{sec:alg.coh}, when \C\ is algebraically coherent, the construction of the Peiffer commutator can be performed entirely in \C, avoiding the use of the coproduct in \dense.

\begin{Proposition} \label{prop:peiff.fact}
The Peiffer commutator preserves the (regular epi, mono) factorization in \dense. Namely, given the following commutative diagram in \dense:
$$ \xymatrix{
    X \ar@{ >->}[r]^{m} \ar[d]_f & A \ar[d]_h & Y \ar@{ >->}[l]_{n} \ar[d]^g \\
    X' \ar@{ >->}[r]_{m'} & A' & Y' \ar@{ >->}[l]^{n'}
} $$
where $m$, $n$, $m'$ and $n'$ are monomorphisms, we have the following factorization of the induced arrow between the Peiffer commutators:
$$ \langle f,g \rangle \colon \xymatrix{
    \langle X,Y \rangle \ar@{->>}[r] & \langle f(X),g(Y) \rangle \ar@{ >->}[r] & \langle X',Y' \rangle
} $$
\end{Proposition}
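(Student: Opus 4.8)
The plan is to combine functoriality of the construction with a reduction to two extreme cases by factoring $f$ and $g$ through their images in \dense. First I would record that sending a compatible triple $(f,g,h)$ to an induced morphism $\langle f,g\rangle$ is functorial. Indeed, from $h\cdot m=m'\cdot f$ and $h\cdot n=n'\cdot g$ one gets $h\cdot[m,n]_\PX=[m',n']_\PX\cdot(f\plus g)$, and the naturality of $-\plus-$, $-\rtimes-$ and hence of $-\twcpx-$ yields the commutative diagram
$$\xymatrix{
X \twcpx Y \ar[r] \ar[d]_{f\twcpx g} & X\plus Y \ar[r]^-{[m,n]_\PX} \ar[d]^{f\plus g} & A \ar[d]^h \\
X'\twcpx Y' \ar[r] & X'\plus Y' \ar[r]_-{[m',n']_\PX} & A'
}$$
so that functoriality of the (regular epi, mono) factorization in \dense\ produces a unique $\langle f,g\rangle\colon\langle X,Y\rangle\to\langle X',Y'\rangle$. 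This assignment respects composition; therefore, factoring $f=i_f\cdot p_f$ and $g=i_g\cdot p_g$ in \dense, with $p_f,p_g$ regular epimorphisms onto $f(X),g(Y)$ and $i_f,i_g$ monomorphisms, gives $\langle f,g\rangle=\langle i_f,i_g\rangle\cdot\langle p_f,p_g\rangle$, where $\langle f(X),g(Y)\rangle$ and $\langle X',Y'\rangle$ are formed inside $A'$ (via $m'i_f,n'i_g$ and $m',n'$ respectively). It then suffices to prove that $\langle p_f,p_g\rangle$ is a regular epimorphism and $\langle i_f,i_g\rangle$ a monomorphism.

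For the regular epi part, write $\theta$ and $\theta'$ for the regular epimorphisms of the image factorizations defining $\langle X,Y\rangle$ and $\langle f(X),g(Y)\rangle$; functoriality gives $\langle p_f,p_g\rangle\cdot\theta=\theta'\cdot(p_f\twcpx p_g)$. By Proposition \ref{prop:epireg}, $p_f\twcpx p_g$ is a regular epimorphism, so the right-hand side is a composite of regular epimorphisms, hence a strong epimorphism. Since a composite $v\cdot u$ that is a strong epimorphism forces $v$ to be a strong epimorphism, and since in the exact category \dense\ strong and regular epimorphisms coincide, I conclude that $\langle p_f,p_g\rangle$ is a regular epimorphism.

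For the monomorphism part, $i_f$ and $i_g$ are monomorphisms between subobjects of the \emph{same} object $A'$, so the claim reduces to monotonicity of the Peiffer commutator. Both $\langle f(X),g(Y)\rangle$ and $\langle X',Y'\rangle$ are subobjects of $A'$, and the ambient comparison being $1_{A'}$, the induced $\langle i_f,i_g\rangle$ commutes with their structural inclusions into $A'$. A morphism between subobjects of $A'$ that commutes with the monomorphisms into $A'$ is itself a monomorphism; hence $\langle i_f,i_g\rangle$ is a monomorphism and $\langle f(X),g(Y)\rangle\le\langle X',Y'\rangle$.

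Combining the two parts, $\langle f,g\rangle=\langle i_f,i_g\rangle\cdot\langle p_f,p_g\rangle$ is a regular epimorphism followed by a monomorphism with middle object $\langle f(X),g(Y)\rangle$, and by uniqueness of the (regular epi, mono) factorization this is exactly the asserted factorization. The main obstacle I anticipate is the bookkeeping of the first step: verifying that $f\twcpx g$ and $\langle f,g\rangle$ genuinely exist and compose correctly for arbitrary (not necessarily regular epi) $f$ and $g$, and that the intermediate commutator is correctly formed inside $A'$. Once this functoriality is secured, the epi part rests entirely on Proposition \ref{prop:epireg} together with strong-epimorphism cancellation, and the mono part is the essentially formal monotonicity of images.
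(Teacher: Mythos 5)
Your proof is correct and follows essentially the same route as the paper: reduce via the (regular epi, mono) factorization of $f$ and $g$ to an epi case handled by Proposition \ref{prop:epireg} together with the observation that the diagonal of the resulting commutative square (equivalently, strong-epi cancellation in the exact category \dense) forces $\langle p_f,p_g\rangle$ to be a regular epimorphism, and a mono case that is trivial by monotonicity. The only difference is expository: you make explicit the functoriality of $(f,g,h)\mapsto\langle f,g\rangle$ and keep $h$ (resp.\ $1_{A'}$) as the ambient comparison instead of also factoring $h$, which the paper leaves implicit in the phrase ``by the properties of the factorization.''
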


\begin{proof}
By the properties of the factorization, it suffices to show that:
\begin{enumerate}
    \item $\langle f,g \rangle$ is a regular epimorphism whenever $f$, $g$ and $h$ are;
    \item $\langle f,g \rangle$ is a monomorphism whenever $f$, $g$ and $h$ are.
\end{enumerate}
The second assertion is trivial, while the first one follows from Proposition \ref{prop:epireg}, since the diagonal of the following commutative square is a regular epimorphism:
$$ \xymatrix{
    X \twcpx Y \ar@{->>}[d]_{f \twcpx g} \ar@{->>}[r] & \langle X,Y \rangle \ar[d]^{\langle f,g \rangle} \\
    X' \twcpx Y'  \ar@{->>}[r] & \langle X',Y' \rangle
} $$
\end{proof}

\begin{Corollary} \label{cor:monotone}
The Peiffer commutator is monotone: if $X\leq X'$ and $Y \leq Y'$ are pre-crossed submodules of a given pre-crossed module $A$, then $\langle X,Y \rangle \leq \langle X',Y' \rangle$.
\end{Corollary}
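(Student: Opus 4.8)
The plan is to deduce this directly from Proposition \ref{prop:peiff.fact} by specializing it to the case $A'=A$ and $h=1_A$. Since $X\leq X'$ and $Y\leq Y'$ are subobjects of the same pre-crossed module $A$, there are unique monomorphisms $f\colon X \to X'$ and $g\colon Y \to Y'$ in \dense\ such that $m=m'\cdot f$ and $n=n'\cdot g$, where $m,m',n,n'$ denote the respective inclusions into $A$. Taking $h=1_A$ we thus obtain exactly the commutative diagram required by Proposition \ref{prop:peiff.fact}, with $f$, $g$ and $h$ all monomorphisms. Its second assertion then yields that the induced arrow $\langle f,g \rangle \colon \langle X,Y \rangle \to \langle X',Y' \rangle$ is a monomorphism in \dense.

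What remains is to check that $\langle f,g \rangle$ is a morphism of subobjects of $A$, i.e.\ that it is compatible with the canonical monomorphisms $\langle X,Y \rangle \rightarrowtail A$ and $\langle X',Y' \rangle \rightarrowtail A$ coming from diagram (\ref{diag:peiff.comm}). For this I would use the construction in Definition \ref{def:Peiff.comm}: with $h=1_A$ the equalities $m=m'\cdot f$ and $n=n'\cdot g$ give $[m,n]\plus = [m',n']\plus \cdot (f\plus g)$, so the regular image of $X\twcpx Y$ along $[m,n]\plus$ is contained, \emph{as a subobject of} $A$, in the regular image of $X'\twcpx Y'$ along $[m',n']\plus$. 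Functoriality of the (regular epi, mono) factorization in \dense\ (which, by the Proposition on factorizations, coincides with the one in \C) then identifies this comparison of images inside $A$ with the monomorphism $\langle f,g \rangle$ produced above. Hence $\langle X,Y \rangle \leq \langle X',Y' \rangle$ as subobjects of $A$, as claimed.

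The only genuine content beyond quoting Proposition \ref{prop:peiff.fact} is this last compatibility step, namely verifying that the mono $\langle f,g \rangle$ is precisely the inclusion of images in $A$ rather than merely an abstract monomorphism between the two Peiffer commutators. I expect this to be the main (though routine) obstacle: it is a diagram chase combining the equality $[m,n]\plus = [m',n']\plus \cdot (f\plus g)$ with functoriality of the image factorization, and no further hypotheses on \C\ are needed.
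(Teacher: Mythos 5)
Your proposal is correct and is exactly the argument the paper intends: the corollary is stated without proof as an immediate consequence of Proposition \ref{prop:peiff.fact}, specialized to $A'=A$, $h=1_A$, with $f$ and $g$ the inclusion monomorphisms, so that the mono part of the factorization gives $\langle X,Y \rangle \rightarrowtail \langle X',Y' \rangle$. Your additional verification that $\langle f,g \rangle$ is compatible with the canonical inclusions into $A$ (so that it is genuinely a comparison of subobjects of $A$ and not just an abstract monomorphism) is the routine diagram chase the paper leaves implicit, and it goes through exactly as you describe.
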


\begin{Corollary} \label{cor:image=0}
If $X$ and $Y$ are pre-crossed submodules of a given pre-crossed module $A$ and $q$ denotes the cokernel of the inclusion of $\langle X,Y \rangle$ in $A$, then $\langle q(X),q(Y) \rangle=0$.
\end{Corollary}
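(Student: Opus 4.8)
The plan is to reduce the statement to Proposition \ref{prop:peiff.fact} applied to the factorisation of $X$ and $Y$ through the cokernel $q$, and then to invoke the defining property of $q$. Write $A'=\Coker\big(\langle X,Y\rangle\hookrightarrow A\big)$ and let $f\colon X \twoheadrightarrow q(X)$ and $g\colon Y\twoheadrightarrow q(Y)$ be the regular epimorphism parts of the (regular epi, mono) factorisations of $q\cdot m$ and $q\cdot n$ in \dense. Since these factorisations coincide with the ones in \C, the maps $f$ and $g$ are regular epimorphisms in \dense, and together with $h=q$ and the inclusions $m',n'$ of $q(X),q(Y)$ into $A'$ they form exactly a commutative diagram of the shape required by Proposition \ref{prop:peiff.fact}. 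Because $f$ is a regular epimorphism we have $f(X)=q(X)$, and likewise $g(Y)=q(Y)$, so the induced arrow supplied by that proposition is a regular epimorphism $\langle f,g\rangle\colon\langle X,Y\rangle \twoheadrightarrow \langle q(X),q(Y)\rangle$ (its mono part being an isomorphism).

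Next I would identify $\langle f,g\rangle$ with the corestriction of $q$ to $\langle X,Y\rangle$. Let $j\colon\langle X,Y\rangle\hookrightarrow A$ and $j'\colon\langle q(X),q(Y)\rangle\hookrightarrow A'$ be the inclusions and let $\pi\colon X\twcpx Y \twoheadrightarrow \langle X,Y\rangle$ be the image projection from Definition \ref{def:Peiff.comm}. A short diagram chase, combining the naturality square $q\cdot[m,n]_\PX=[m',n']_\PX\cdot(f\plus g)$, the naturality of the kernel restriction $f\twcpx g\colon X\twcpx Y\to q(X)\twcpx q(Y)$, and the commutative square relating $\langle f,g\rangle$ to $f\twcpx g$ that appears in the proof of Proposition \ref{prop:peiff.fact}, yields $j'\cdot\langle f,g\rangle\cdot\pi = q\cdot j\cdot\pi$. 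Since $\pi$ is an epimorphism, this forces $j'\cdot\langle f,g\rangle = q\cdot j$.

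Finally, $q$ is by definition the cokernel of $j$, so $q\cdot j=0$ and therefore $j'\cdot\langle f,g\rangle=0$. As $j'$ is a monomorphism we obtain $\langle f,g\rangle=0$; but $\langle f,g\rangle$ is a regular, hence ordinary, epimorphism onto $\langle q(X),q(Y)\rangle$, and in the pointed category \dense\ an epimorphism which equals the zero morphism forces its codomain to be the zero object (comparing $1\cdot\langle f,g\rangle$ and $0\cdot\langle f,g\rangle$). Thus $\langle q(X),q(Y)\rangle=0$, as claimed. I expect the only genuinely delicate point to be the bookkeeping of the second paragraph: checking carefully that $\langle f,g\rangle$ is the corestriction of $q$ along the various image projections and kernel restrictions. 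Once that identification is secured, the cokernel property makes the conclusion immediate.
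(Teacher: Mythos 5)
Your proof is correct and takes essentially the route the paper intends: the corollary is stated there as an immediate consequence of Proposition \ref{prop:peiff.fact}, the point being exactly that the induced arrow $\langle f,g\rangle$ is the restriction of $q$ to the Peiffer commutators, which vanishes because $q$ is the cokernel of $\langle X,Y\rangle \hookrightarrow A$, so its regular-epi image $\langle q(X),q(Y)\rangle$ must be $0$. Your second paragraph merely makes explicit the identification $j'\cdot\langle f,g\rangle = q\cdot j$ that the paper leaves implicit.
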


\subsection{Reflection onto crossed modules}\label{sec:reflection}

We are going to show how the Peiffer commutator may allow to describe directly the reflection
$$ \xymatrix{
    \dense \ar@<1ex>[r]^-I \ar@{}[r]|-\bot & \XMod_B(\C) \ar@<1ex>[l]^-H
} $$
where $\XMod_B(\C)$ stands for the subcategory of internal crossed modules of codomain $B$, introduced by Janelidze in \cite{Janelidze}. We recall from \cite{MFVdL} that, when the semi-abelian category \C\ satisfies the condition \SH, an internal pre-crossed module $(\partial\colon A \to B,\xi)$ is a crossed module if and only if the following diagram commutes (Peiffer condition):
$$ \xymatrix{
    A \flat A \ar[d]_{\chi} \ar[r]^-{\partial \flat 1} & B \flat A \ar[d]^{\xi} \\
    A \ar[r]_1 & A
} $$

As a corollary of Proposition \ref{prop:tw.coop} we get the following lemma.

\begin{Lemma} \label{lem:xmod=>peiff=0}
Given a crossed module $(\partial\colon A \to B,\xi)$, we have $\langle A,A \rangle=0$. Moreover, if $X$ and $Y$ are pre-crossed submodules of $A$, then $\langle X,Y \rangle=0$.
\end{Lemma}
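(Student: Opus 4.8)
The plan is to derive both assertions from the characterization of crossed modules via the Peiffer condition (recalled just above the statement) together with Proposition \ref{prop:tw.coop}, which translates the vanishing of a Peiffer commutator into the existence of a comparison morphism out of the Peiffer product. The key observation is that the Peiffer product $X \farf Y$ was built as a quotient of $X \plus Y$ by exactly the Peiffer words; so showing $\langle X,Y\rangle = 0$ amounts to checking that the two inclusions into $A$ factor through this quotient, and the Peiffer condition on $A$ is precisely what makes this factorization possible.

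First I would treat the special case $X=Y=A$ with $m=n=1_A$. Here I want to produce the comparison map $\varphi\colon A \farf A \to A$ of Proposition \ref{prop:tw.coop}, satisfying $\varphi \cdot l_A = 1_A$ on each copy. Recall that $A \farf A$ is the codomain of the pushout in Definition \ref{def:peiff.prod}, obtained by coequalizing the two structure maps $[j_A,i_A]_\PX$ and $[i_A,j_A]_\PX$ out of $A \plus A$. The multiplication/codiagonal $[1,1]_\PX \colon A \plus A \to A$ is the natural candidate to induce $\varphi$; the content to verify is that it coequalizes the appropriate pair, which by the universal property of the semidirect products $A \rtimes A$ reduces to checking that the two actions $A$-on-$A$ by $\delta^*_A\xi$ agree with conjugation $\chi$ after mapping down to $A$. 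This is exactly the commutativity of the Peiffer diagram displayed before Lemma \ref{lem:xmod=>peiff=0}. Once $\varphi$ is produced, Proposition \ref{prop:tw.coop} gives $\langle A,A\rangle = 0$ directly.

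For the second assertion, I would not redo the construction but instead invoke monotonicity. Given pre-crossed submodules $X, Y \le A$, I have $X \le A$ and $Y \le A$, so Corollary \ref{cor:monotone} yields $\langle X,Y\rangle \le \langle A,A\rangle = 0$, whence $\langle X,Y\rangle = 0$. This reduces the general case to the diagonal case with essentially no extra work, which is the efficient route.

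The main obstacle I anticipate is the first step: rigorously verifying that the codiagonal $[1,1]_\PX$ descends through the pushout defining $A \farf A$. One must be careful that the two legs $X \rtimes A$ and $A \rtimes X$ (here both equal to $A \rtimes A$ but with the roles of the two factors swapped) are identified correctly, and that the Peiffer condition is applied in the right internal form — i.e.\ as the equality of the two composites $B\flat A \to A$ rather than as an elementwise identity. Translating the classical Peiffer words $xyx^{-1}({}^{\delta x}y)^{-1}$ into this internal diagrammatic coequalizer is where the semi-abelian subtlety lies, and I would lean on the equivalence with reflexive graphs and on the explicit description of $[\delta_X,\delta_Y\rangle$ from Proposition \ref{prop:semidir.PX} to keep the verification diagrammatic rather than element-theoretic.
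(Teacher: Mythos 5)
Your plan has the same skeleton as the paper's proof: use the Peiffer condition to produce the arrow $[1,1\rangle\colon A\rtimes A\to A$ with $[1,1\rangle\cdot j=[1,1\rangle\cdot i=1_A$, use it as a cocone on the pushout of Definition~\ref{def:peiff.prod} to obtain $\varphi\colon A\farf A\to A$, conclude $\langle A,A\rangle=0$ via Proposition~\ref{prop:tw.coop}, and deduce the second assertion from Corollary~\ref{cor:monotone} (this last reduction is exactly the paper's). But there is a genuine gap at the descent step, and it is not the one you flag (rewriting the Peiffer words internally); it is that all your verifications take place in \C, while the universal properties you need live in \dense. The presentation of $A\rtimes A$ as a quotient of the \C-coproduct $A+A$ only yields $[1,1\rangle$ as an arrow of \C, and your ``check the two actions agree with conjugation after mapping down to $A$'' only yields the equality $[1,1\rangle\cdot[j,i]_\PX\cdot\sigma=[1,1]_\PX\cdot\sigma$, i.e.\ an equality after precomposition with the comparison $\sigma\colon A+A\to A\plus A$. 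Since this lemma assumes nothing like algebraic coherence, $\sigma$ need not be an epimorphism of \C, and by Remark~\ref{rem:sigma.epi} it can be cancelled only against morphisms of \dense. So from your computation you can conclude neither the cocone condition $[1,1\rangle\cdot[j,i]_\PX=[1,1]_\PX=[1,1\rangle\cdot[i,j]_\PX$ as arrows out of $A\plus A$ (needed because the square defining $A\farf A$ is a pushout in \dense), nor that the induced $\varphi$ is the morphism of pre-crossed modules required by Proposition~\ref{prop:tw.coop}.

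The paper closes exactly this hole with Lemma~\ref{lem:3outof2}: the pair $(j,i)$ is jointly strongly epimorphic, so $[j,i]_\PX$ is a regular epimorphism in \dense, and since the composite $[1,1\rangle\cdot[j,i]_\PX=[1,1]_\PX$ is a morphism in \dense, part~(2) of that lemma upgrades $[1,1\rangle$ itself to a morphism in \dense; only after this step do the equalities tested on the two copies of $A$, the pushout property, and Proposition~\ref{prop:tw.coop} become available. Your proposal contains no counterpart of this argument --- Lemma~\ref{lem:3outof2} is never invoked and no substitute is offered --- so as written the construction of $\varphi$ does not go through. The remainder of the proposal, in particular treating only the diagonal case $X=Y=A$ and obtaining the general statement by monotonicity, matches the paper and is fine.
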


\begin{proof}
Since $\partial$ is a crossed module, it satisfies the Peiffer condition, which is equivalent to the commutativity of the following diagram
$$ \xymatrix{
    A \flat A \ar[d]_{\partial^*\xi} \ar[r] & A+A \ar[d]^{[1,1]} \\
    A \ar[r]_1 & A
} $$
which, in turn, is equivalent to the existence of the (unique) arrow $[1,1\rangle$ making the following diagram commute:
$$ \xymatrix{
    A \ar[r]^-{j} \ar[dr]_{1} & A \rtimes A \ar[d]^(.4){[1,1\rangle}
        & A \ar[l]_-{i} \ar[dl]^{1} \\
    & A
} $$
By Proposition \ref{prop:semidir.PX}, we know that $j$ and $i$ are morphisms in \dense\ of codomain $[\partial,\partial\rangle\colon A \rtimes A \to B$, then there exists in \dense\ the canonical arrow $[j,i]_\PX\colon A \plus A \to A \rtimes A$, which is a regular epimorphism since $(j,i)$ is a jointly strongly epimorphic pair in \C\ and then in \dense. Since both $[j,i]_\PX$ and the composite $[1,1\rangle[j,i]_\PX=[1,1]_\PX$ are morphisms in \dense\ and $[j,i]_\PX$ is a regular epimorphism, by Lemma \ref{lem:3outof2} $[1,1\rangle$ is also a morphism in \dense:
$$ \xymatrix{
    A \rtimes A \ar[dr]_{[\partial,\partial\rangle} \ar[rr]^-{[1,1\rangle} & & A \ar[dl]^{\partial} \\
    & B
} $$
Moreover, $[1,1\rangle[j,i]_\PX=[1,1\rangle[i,j]_\PX=[1,1]_\PX$. From the definition of $A \farf A$ it follows that there exists a (unique) arrow $\varphi$ making the following diagram commute:
$$ \xymatrix@C=10ex{
    A \plus A \ar[r]^{[j,i]_\PX} \ar[d]_{[i,j]_\PX} & A \rtimes A \ar[d] \ar@/^/[ddr]^{[1,1\rangle} \\
    A \rtimes A \ar[r] \ar@/_/[drr]_{[1,1\rangle} & A \farf A \ar[dr]_\varphi \\
    & & A
} $$
By Proposition \ref{prop:tw.coop} this means that $\langle A,A \rangle=0$.

The second statement follows from Corollary \ref{cor:monotone}.
\end{proof}

In fact, when the condition \SH\ holds (as in the case of strongly protomodular categories and in particular for algebraically coherent categories), the property $\langle A,A \rangle=0$ characterizes crossed modules among pre-crossed modules.

\begin{Proposition} \label{prop:char.xmod}
In a semi-abelian category \C\ satisfying \SH, a pre-crossed module $(\partial\colon A \to B,\xi)$ is a crossed module if and only if $\langle A,A \rangle=0$.
\end{Proposition}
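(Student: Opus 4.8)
The plan is to prove the nontrivial implication, since Lemma~\ref{lem:xmod=>peiff=0} already gives one direction: if $(\partial\colon A \to B,\xi)$ is a crossed module then $\langle A,A \rangle=0$. So I would assume $\langle A,A \rangle=0$ and deduce that the Peiffer condition holds, i.e.\ that the square
$$ \xymatrix{
    A \flat A \ar[d]_{\chi} \ar[r]^-{\partial \flat 1} & B \flat A \ar[d]^{\xi} \\
    A \ar[r]_1 & A
} $$
commutes. The strategy is to reverse the chain of equivalences used in the proof of Lemma~\ref{lem:xmod=>peiff=0}, reading it from the commutator back to the Peiffer identity, and to exploit the cancellation properties of the canonical comparison arrows established earlier in the paper.

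Concretely, I would take $X=Y=A$ with $m=n=1_A$ in Definition~\ref{def:Peiff.comm}, so that $[m,n]_\PX=[1,1]_\PX$ and $\langle A,A \rangle$ is the regular image of $A \twcpx A$ under $[1,1]_\PX$. First I would invoke Proposition~\ref{prop:tw.coop}: the hypothesis $\langle A,A \rangle=0$ is equivalent to the existence of a (necessarily unique) morphism $\varphi\colon A \farf A \to A$ in \dense\ with $\varphi\cdot l_A = 1_A$ on both copies of $A$. Composing $\varphi$ with the two structural morphisms $A \rtimes A \to A \farf A$ and $A \farf A \to A \rtimes A$ of the defining pushout of the Peiffer product, I obtain an arrow $A \rtimes A \to A$ whose composites with $j$ and $i$ are both $1_A$; by uniqueness this is precisely an arrow $[1,1\rangle\colon A \rtimes A \to A$ making
$$ \xymatrix{
    A \ar[r]^-{j} \ar[dr]_{1} & A \rtimes A \ar[d]^(.4){[1,1\rangle}
        & A \ar[l]_-{i} \ar[dl]^{1} \\
    & A
} $$
commute. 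The existence of such an arrow is equivalent to the commutativity of the square expressing that $\partial^*\xi$ coequalizes with $[1,1]$ through $A$, which is in turn equivalent (using the explicit form of the action $\partial^*\xi$ as $(\partial \flat 1)$ followed by $\xi$, together with the condition \SH) to the Peiffer condition displayed above.

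The main obstacle I anticipate is the careful verification that $\varphi$, a priori only a morphism in \dense, actually produces the arrow $[1,1\rangle$ with the right universal property, and that the resulting diagram does encode the Peiffer identity rather than a weaker condition. This is where the condition \SH\ is essential: it is what guarantees that the internal formulation via the square $A \flat A \to B \flat A \rightrightarrows A$ is genuinely equivalent to membership in $\XMod_B(\C)$, and it was already used implicitly in Lemma~\ref{lem:xmod=>peiff=0} to pass between the Peiffer condition and the factorization through $\partial^*\xi$. I would therefore emphasize that each implication in the chain
$$ \langle A,A \rangle=0 \;\Longleftrightarrow\; \exists\,[1,1\rangle \;\Longleftrightarrow\; \text{Peiffer condition} $$
is a biconditional, so that the argument of Lemma~\ref{lem:xmod=>peiff=0} is simply run in reverse. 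The remaining steps—checking that $[1,1\rangle$ is a morphism in \dense\ and that the uniqueness clauses apply—are routine given Lemma~\ref{lem:3outof2} and Remark~\ref{rem:sigma.epi}, and should be dispatched by the same cancellation arguments already in place.
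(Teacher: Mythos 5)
Your proposal follows the paper's proof essentially step for step: necessity via Lemma~\ref{lem:xmod=>peiff=0}, then Proposition~\ref{prop:tw.coop} to obtain $\varphi\colon A \farf A \to A$, composition with the canonical arrow $A \rtimes A \to A \farf A$ to produce $[1,1\rangle$, and finally the equivalence (already observed in the proof of Lemma~\ref{lem:xmod=>peiff=0}) between the existence of $[1,1\rangle$ and the Peiffer condition, which under \SH\ yields the crossed module structure. The only blemish is your reference to a ``structural morphism $A \farf A \to A \rtimes A$'' of the defining pushout --- no such arrow exists, since both pushout legs map \emph{into} $A \farf A$ --- but as you only ever use the composite of $\varphi$ with $A \rtimes A \to A \farf A$, this slip does not affect the argument.
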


\begin{proof}
The condition $\langle A,A \rangle=0$ is necessary by Lemma \ref{lem:xmod=>peiff=0}.

Suppose now that $\langle A,A \rangle=0$. Then, by Proposition \ref{prop:tw.coop}, there exists a unique arrow $\varphi$ making the following diagram commute:
$$ \xymatrix{
    A \ar[r]^-{l_1} \ar[dr]_{1} & A \farf A \ar@{-->}[d]^{\varphi} & A \ar[l]_-{l_2} \ar[dl]^{1} \\
    & A
} $$
By composition with the canonical arrow $A \rtimes A \to A \farf A$, we get the arrow $[1,1\rangle$ below:
$$ \xymatrix{
    A \ar[r]^-{j} \ar[dr]_{1} & A \rtimes A \ar[d]^(.4){[1,1\rangle}
        & A \ar[l]_-{i} \ar[dl]^{1} \\
    & A
} $$
As observed in the proof of Lemma \ref{lem:xmod=>peiff=0}, the existence of $[1,1\rangle$ is equivalent to the Peiffer condition, which, under the \SH\ condition, implies that $(\partial\colon A \to B,\xi)$ is a crossed module.
\end{proof}

Furthermore the reflection $\dense \to \XMod_B(\C)$ can be obtained performing the quotient on the Peiffer commutator. More precisely, the following theorem holds.

\begin{Theorem}
Let $(\delta\colon X \to B,\xi)$ be an internal pre-crossed module in a semi-abelian category \C\ satisfying \SH. Then $HI(\delta)$ is obtained by the following cokernel in \dense:
$$ \xymatrix{
    \langle X,X \rangle \ar@{ |>->}[r] \ar[dr]_0 & X \ar[d]_\delta \ar@{-|>}[r]^-{\eta}
        & \dfrac{X}{\langle X,X \rangle} \ar[dl]^{HI(\delta)} \\
    & B
} $$
\end{Theorem}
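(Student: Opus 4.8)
The plan is to exhibit the cokernel $\eta\colon X \to X/\langle X,X \rangle$ in \dense\ of the inclusion of the Peiffer commutator as the reflection unit. This cokernel exists because $\langle X,X \rangle$ is a normal subobject of $X$ (it is the regular image of the normal subobject $X \twcpx X$) carrying the zero arrow to $B$; consequently $\delta$ passes to the quotient and furnishes the induced arrow $HI(\delta)\colon X/\langle X,X \rangle \to B$. Since the adjunction $I \dashv H$ is already known to exist, by uniqueness of reflections it suffices to verify two facts: (a) the quotient $X/\langle X,X \rangle$ lies in $\XMod_B(\C)$, and (b) every morphism in \dense\ from $X$ to a crossed module factors uniquely through $\eta$.

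For (a) I would invoke Corollary \ref{cor:image=0} with $A=X$ and both pre-crossed submodules taken to be $X$ itself via the identity inclusion. Writing $q=\eta$ for the cokernel of $\langle X,X \rangle \hookrightarrow X$, the corollary gives $\langle q(X),q(X) \rangle = 0$; as $\eta$ is a regular epimorphism we have $q(X)=X/\langle X,X \rangle$, so the self Peiffer commutator of the quotient vanishes. By Proposition \ref{prop:char.xmod}—this is exactly where the hypothesis \SH\ on \C\ enters—this forces $X/\langle X,X \rangle$ to be a crossed module.

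For (b) let $h\colon X \to Z$ be a morphism in \dense\ with $Z$ a crossed module. I would apply Proposition \ref{prop:peiff.fact} to the square with $m=n=1_X$, $m'=n'=1_Z$ and all three vertical arrows equal to $h$. This produces the induced arrow $\langle h,h \rangle\colon \langle X,X \rangle \to \langle Z,Z \rangle$ which, by construction, fits into the naturality square $h\cdot \iota_X = \iota_Z \cdot \langle h,h \rangle$, where $\iota_X$ and $\iota_Z$ denote the inclusions of the respective Peiffer commutators. Since $Z$ is a crossed module, Lemma \ref{lem:xmod=>peiff=0} yields $\langle Z,Z \rangle = 0$, hence $\iota_Z = 0$ and therefore $h\cdot \iota_X = 0$. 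Thus $h$ kills the Peiffer commutator and, by the universal property of the cokernel, factors as $h = \bar h \cdot \eta$; the factorization is unique because $\eta$ is an epimorphism. This identifies $X/\langle X,X \rangle$ with $HI(\delta)$ and $\eta$ with the reflection unit.

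The genuinely substantial step is (b), and within it the vanishing $h\cdot \iota_X = 0$: everything rests on the naturality of the Peiffer commutator packaged in Proposition \ref{prop:peiff.fact} together with $\langle Z,Z \rangle = 0$. If one prefers to avoid quoting Proposition \ref{prop:peiff.fact} in the degenerate case of identity inclusions, the same conclusion can be reached directly: describe $\langle X,X \rangle$ as the regular image of $X \twcpx X \hookrightarrow X \plus X \xrightarrow{[1,1]_\PX} X$, observe that $h \plus h$ restricts to a map $X \twcpx X \to Z \twcpx Z$ and that $h\cdot [1,1]_\PX = [1,1]_\PX \cdot (h \plus h)$, and deduce that $h$ carries this image into $\langle Z,Z \rangle = 0$. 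The only remaining points needing a word of care are that $\langle X,X \rangle$ is indeed normal (so $\eta$ is a regular epimorphism) and that uniqueness of $\bar h$ reduces to $\eta$ being epic—both of which are immediate.
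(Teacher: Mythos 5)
Your proof is correct, and it splits into two halves of unequal novelty. Part (a) — showing the quotient is a crossed module via Corollary \ref{cor:image=0} and Proposition \ref{prop:char.xmod} (the only place \SH\ is used) — is exactly the paper's argument. For part (b), however, you take a genuinely different route. The paper presents $\eta$ as the pushout of $\Sigma\colon X\plus X \to X\farf X$ along $[1,1]_\PX$ (pasting diagram (\ref{diag:reflection}) with (\ref{diag:peiff.comm})), and then, given $f\colon X \to A$ with $A$ a crossed module, uses Lemma \ref{lem:xmod=>peiff=0} applied to $f(X)$ together with Proposition \ref{prop:tw.coop} to manufacture an arrow $\varphi\colon X\farf X \to A$ satisfying $\varphi\cdot\Sigma = f\cdot[1,1]_\PX$; the factorization through $\eta$ then drops out of the pushout's universal property. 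You bypass the Peiffer product entirely in this step: you use the naturality of the Peiffer commutator (Proposition \ref{prop:peiff.fact} with identity inclusions, or equivalently the direct image argument you sketch) to get $h\cdot\iota_X = \iota_Z\cdot\langle h,h\rangle$, conclude $h\cdot\iota_X = 0$ from $\langle Z,Z\rangle = 0$, and invoke the plain cokernel universal property, with uniqueness from $\eta$ being epic. Your version is slightly more elementary in that it needs only the cokernel presentation of the quotient rather than the pushout presentation; the paper's version has the side benefit of exhibiting the reflection as a quotient of $X\farf X$, which is reused in the algebraically coherent setting (the remark computing the reflection via a pushout in \C). Your justification of the cokernel's existence — $\langle X,X\rangle$ is normal in \dense\ as the regular image of the kernel $X\twcpx X$ along the split epimorphism $[1,1]_\PX$, in an exact protomodular category — is sound, and in any case the paper's remark following Definition \ref{def:Peiff.comm} already grants this.
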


\begin{proof}
The regular epimorphism $\eta$ arises also as the right vertical map in the following pushout in \dense:
\begin{equation} \label{diag:reflection}
\begin{aligned}
\xymatrix{
    X \plus X \ar@{-|>}[r]^-{[1,1]_\PX} \ar@{-|>}[d]_\Sigma & X \ar@{-|>}[d]^-\eta \\
    X \farf X \ar@{-|>}[r]_-q & \dfrac{X}{\langle X,X \rangle}
}
\end{aligned}
\end{equation}
(paste this diagram with diagram (\ref{diag:peiff.comm})).

The fact that $HI(\delta)$ is a crossed module follows from Corollary \ref{cor:image=0} and Proposition \ref{prop:char.xmod}.

On the other hand, given any morphism $f$ in \dense:
$$ \xymatrix{
    X \ar[rr]^f \ar[dr]_\delta & & A \ar[dl]^\partial \\
    & B
} $$
where the codomain is a crossed module, then by Lemma \ref{lem:xmod=>peiff=0} we have that \mbox{$\langle f(X),f(X) \rangle=0$} and by Proposition \ref{prop:tw.coop} this yields an arrow $\varphi \colon X \farf X \to A$ making the following diagram commute:
$$ \xymatrix{
    X \ar[r]^-{l_1} \ar[dr]_{f} & X \farf X \ar@{-->}[d]^{\varphi} & X \ar[l]_-{l_2} \ar[dl]^{f} \\
    & A
} $$
Hence $\varphi\cdot\Sigma = f\cdot[1,1]_\PX$ and the universal property of $HI(\delta)$ follows from the one of the pushout (\ref{diag:reflection}).
\end{proof}


\section{The algebraically coherent case} \label{sec:alg.coh}

We consider from now on the following condition on the base
category \C:

\begin{itemize}
    \item[(CS)] For any $(\delta_X\colon X \to B)$ and $(\delta_Y\colon Y \to B)$ in \dense, the comparison arrow
        $$ \sigma \colon X+Y \to X \plus Y $$
is a regular epimorphism.
\end{itemize}

It turns out (as Proposition \ref{prop:ac.cs} shows) that, in the semi-abelian context, this condition is equivalent to \emph{algebraic coherence}, independently introduced in \cite{CGrayVdL2}. A semi-abelian category is algebraically coherent when, for any $B$ in \C, the kernel functor
$$ \Ker \colon \Pt_B(\C) \to \C $$
preserves jointly strongly epimorphic pairs. This condition is fulfilled by a wide class of algebraic varieties such as the categories of groups, rings, Lie and Leibniz algebras, Poisson algebras and in general any \emph{category of interest} in the sense of Orzech \cite{Orzech}, as shown in \cite{CGrayVdL2}.

\begin{Proposition} \label{prop:ac.cs}
A semi-abelian category \C\ is algebraically coherent if and only if it satisfies the condition \CS.
\end{Proposition}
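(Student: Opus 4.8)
The plan is to turn the condition \CS\ into a literal statement about the kernel functor $\Ker\colon\Pt_B(\C)\to\C$ and then read algebraic coherence off it. The first and decisive step is to locate the data of \CS\ inside $\Pt_B(\C)$. Via the equivalence $\dense\simeq\RG_B(\C)$, the coproduct $X\plus Y$ is the normalization of the coproduct of reflexive graphs, whose object of arrows is the pushout $(X\rtimes B)+_B(Y\rtimes B)$ of the two unit sections, equipped with the induced domain map $D\colon W\to B$. But this pushout together with $D$ is precisely the coproduct of the points $(X\rtimes B)$ and $(Y\rtimes B)$ computed in the pointed category $\Pt_B(\C)$, and its kernel is $X\plus Y$. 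Since $\Ker(X\rtimes B)=X$ and $\Ker(Y\rtimes B)=Y$, restricting the coproduct injections $\iota_1,\iota_2$ of $\Pt_B(\C)$ to kernels yields exactly the canonical maps $X\to X\plus Y$ and $Y\to X\plus Y$. Hence $\sigma\colon X+Y\to X\plus Y$ is nothing but the comparison $[\Ker\iota_1,\Ker\iota_2]$ from the coproduct of the kernels to the kernel of the coproduct, and \CS\ asserts that this morphism is a regular epimorphism.

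With this dictionary I would record three elementary facts. First, coproduct injections are always jointly strongly epimorphic: if $\iota_1=ma$ and $\iota_2=mb$ factor through a monomorphism $m$, then $1=[\iota_1,\iota_2]=m[a,b]$, so $m$ is a split epimorphism and, being monic, an isomorphism. Second, $\Ker$ preserves regular epimorphisms: the kernel inclusion of a point $(C,\gamma)$ pulls back along a regular epimorphism $h\colon(A,\alpha)\to(C,\gamma)$ to the kernel inclusion of $(A,\alpha)$, so the induced map on kernels, being a pullback of $h$, is a regular epimorphism. Third, a composition lemma: if $(p_1,p_2)$ is jointly strongly epimorphic and $e$ is a strong (equivalently, regular) epimorphism, then $(ep_1,ep_2)$ is again jointly strongly epimorphic, proved by pulling back a would-be monomorphism $m$ along $e$, using joint strong epimorphy of $(p_1,p_2)$ to trivialize the pullback of $m$, and then strong epimorphy of $e$ to trivialize $m$.

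For the direction ``algebraically coherent $\Rightarrow$ \CS'', I would apply the definition to the jointly strongly epimorphic pair $(\iota_1,\iota_2)$: algebraic coherence makes $(\Ker\iota_1,\Ker\iota_2)=(\sigma u_X,\sigma u_Y)$ jointly strongly epimorphic, where $u_X,u_Y$ are the injections of $X+Y$ in \C. Factoring $\sigma=m\cdot e$ with $m$ monic, both $\sigma u_X$ and $\sigma u_Y$ factor through $m$, so $m$ is an isomorphism and $\sigma$ is a regular epimorphism. For the converse ``\CS\ $\Rightarrow$ algebraically coherent'', I would take an arbitrary jointly strongly epimorphic pair $(f,g)$ in $\Pt_B(\C)$; then $[f,g]$ is a strong, hence regular, epimorphism, so $\Ker[f,g]$ is a regular epimorphism by the second fact. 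Writing the domains of $f$ and $g$ as semidirect products identifies $\Ker$ of their coproduct with some $U\plus V$, and \CS\ together with the composition lemma turns $(\Ker\iota_1,\Ker\iota_2)$ into a jointly strongly epimorphic pair. Since $\Ker f=\Ker[f,g]\cdot\Ker\iota_1$ and $\Ker g=\Ker[f,g]\cdot\Ker\iota_2$, applying the composition lemma once more with $e=\Ker[f,g]$ shows $(\Ker f,\Ker g)$ is jointly strongly epimorphic, which is algebraic coherence.

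The main obstacle is the translation step of the first paragraph: one must be sure that the combinatorially defined $\sigma\colon X+Y\to X\plus Y$ genuinely coincides with the canonical ``coproduct-of-kernels $\to$ kernel-of-coproduct'' comparison for the coproduct of points in $\Pt_B(\C)$, since only then do the abstract preservation arguments apply. Once that identification is secured, both implications reduce to the formal calculus of jointly strongly epimorphic pairs, regular epimorphisms and the composition lemma, and no computation specific to pre-crossed modules is required.
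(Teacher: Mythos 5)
Your proposal is correct and takes essentially the same route as the paper: both arguments hinge on identifying $\sigma$ with the kernel-functor comparison for the coproduct of points in $\Pt_B(\C)$, prove that algebraic coherence implies \CS\ by applying coherence to the (jointly strongly epimorphic) coproduct injections, and prove the converse by factoring the kernel restriction of an arbitrary jointly strongly epimorphic cospan in $\Pt_B(\C)$ through $\sigma$. The one divergence is minor: where the paper sees that this kernel restriction, i.e.\ the copairing $[f,g]_\PX$, is a regular epimorphism by transporting joint strong epimorphy through the doubling embedding $\Pt_B(\C)\hookrightarrow\RG_B(\C)$ and the equivalence with \dense, you obtain the same intermediate fact from pullback-stability of regular epimorphisms (your ``$\Ker$ preserves regular epis'' observation) together with an explicit composition lemma for strong epimorphisms, which is a slightly more self-contained justification of the same step.
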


\begin{proof}
Given $X$ and $Y$ in \dense, it suffices to consider the coproduct of the corresponding objects in $\RG_B(\C)$ and restrict the canonical injections to the kernels of the domain projections. If \C\ is algebraically coherent, these restrictions are jointly strongly epimorphic or, equivalently, $\sigma$ is a regular epimorphism.

Conversely, it suffices to observe that $\Pt_B(\C)$ is isomorphic to the full subcategory of $\RG_B(\C)$ whose objects are those reflexive graphs where domain and codomain projections coincide, and this embedding preserves coproducts. Then, given a jointly strongly epimorphic pair $(f_1,g_1)$ in $\Pt_B(\C)$, whose restriction to kernels is $(f,g)$:
$$ \xymatrix{
    X \ar[r]^f \ar@{ |>->}[d] & A \ar@{ |>->}[d] & Y \ar[l]_g \ar@{ |>->}[d] \\
    X_1 \ar@<.5ex>[d]^{p} \ar[r]^{f_1} & A_1 \ar@<.5ex>[d]^{\alpha} & Y_1 \ar@<.5ex>[d]^{p'} \ar[l]_{g_1} \\
    B \ar@<.5ex>[u]^{s} \ar[r]_1 & B \ar@<.5ex>[u]^{\beta} \ar[r]_1 & B \ar@<.5ex>[u]^{s'}
} $$
just doubling the split epimorphisms, one gets a jointly strongly epimorphic pair in $\RG_B(\C)$, and, by equivalence, in \dense. Hence, the arrow $[f,g]_\PX \colon X \plus Y \to A$ is a regular epimorphism. If $\sigma$ is a regular epimorphism, by composition $[f,g] \colon X+Y \to A$ is also a regular epimorphism, proving that the kernel functor $\Ker \colon \Pt_B(\C) \to \C$ preserves jointly strongly epimorphic pairs as desired.
\end{proof}

We saw in Section \ref{sec:colimits} that the join of two kernels in \dense\ coincides with the join of the corresponding objects in \C. Under \CS, this property extends to arbitrary subobjects. In fact, we have more.

\begin{Proposition} \label{prop:joinPX}
A semi-abelian category \C\ satisfies the condition \CS\ if and only if the join of any pair of subobjects in \dense\ coincides with the join of the corresponding subobjects in \C:
$$ \xymatrix{
    & X \vee Y \ar@{ >->}[dr] \ar[ddr]_(.3){\delta_X \vee \delta_Y}
        & & Y \ar@{ >->}[ll] \ar[ddl]^{\delta_Y} \ar@{ >->}[dl]_{n} \\
    X \ar@{ >->}[ur] \ar[drr]_{\delta_X} \ar@{ >->}[rr]_m & & A \ar[d]_(.4)\delta \\
    & & B
} $$
\end{Proposition}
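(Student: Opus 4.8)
The statement is an "if and only if" between the condition \CS\ and the claim that joins of subobjects in \dense\ agree with joins in \C. The plan is to attack the two directions separately, with the forward direction (\CS\ implies the join coincidence) being the substantive one. First I would fix the two subobjects $m\colon X \rightarrowtail A$ and $n\colon Y \rightarrowtail A$ in \dense. The join $X \vee_\PX Y$ in \dense\ is by definition the (regular epi, mono) image of $[m,n]_\PX\colon X \plus Y \to A$, while the join $X \vee Y$ in \C\ is the image of $[m,n]\colon X+Y \to A$. Since by the Proposition of Section \ref{sec:PXMod} the (regular epi, mono) factorization in \dense\ is computed exactly as in \C, it suffices to compare these two images in \C.

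The key observation is the factorization $[m,n] = [m,n]_\PX \cdot \sigma$ through the comparison arrow $\sigma\colon X+Y \to X \plus Y$ of Section \ref{sec:colimits}. Assuming \CS, the arrow $\sigma$ is a regular epimorphism, so the two composites $[m,n]$ and $[m,n]_\PX$ factor through the \emph{same} regular image in \C: precomposing a map by a regular epimorphism does not alter its regular image. Hence the image of $[m,n]$ equals the image of $[m,n]_\PX$, which is precisely the assertion that $X \vee Y = X \vee_\PX Y$. This direction is essentially formal once one has Remark \ref{rem:sigma.epi} and the stability of regular images under precomposition by regular epimorphisms available in the exact category \C.

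For the converse, I would run the argument contrapositively or, more directly, specialize the join-coincidence hypothesis to the canonical injections. Taking $A = X \plus Y$ together with the two canonical morphisms $l_X, l_Y$ (or rather the images of $X$ and $Y$ inside $X \plus Y$) as the two subobjects, their join in \dense\ is all of $X \plus Y$, since $(l_X,l_Y)$ is jointly strongly epimorphic in \dense\ by the remark following Definition \ref{def:peiff.prod}. If the hypothesis forces this join to coincide with the join in \C\ of the images of $X$ and $Y$ under $\sigma$, one recovers that the image of $[\,\cdot\,,\cdot\,] = $ the corresponding map out of $X+Y$ is all of $X \plus Y$, i.e.\ $\sigma$ is a regular epimorphism, which is exactly \CS.

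The main obstacle I anticipate is the converse direction: making precise \emph{which} pair of subobjects of \emph{which} object one feeds to the hypothesis so that the resulting join-coincidence collapses to the statement that $\sigma$ is regular epi. The forward direction is clean, but in the converse one must be careful that the subobjects chosen in \C\ are genuinely the subobjects corresponding (under normalization) to $X$ and $Y$ sitting inside $X \plus Y$, and that their join in \C\ is the image of $\sigma$ rather than something larger. Verifying that this image is exactly $X \vee Y$ in \C, and that forcing it to equal $X \plus Y$ yields surjectivity of $\sigma$, is the delicate bookkeeping step; everything else reduces to the already-established fact that factorizations, kernels, and images in \dense\ are computed as in \C.
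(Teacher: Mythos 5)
Your proof is correct and takes essentially the same approach as the paper's: the forward direction compares the regular images of $[m,n] = [m,n]_\PX\cdot\sigma$ and $[m,n]_\PX$, using that under \CS\ the comparison $\sigma$ is a regular epimorphism and hence leaves the image unchanged, while the converse tests the hypothesis on $X$ and $Y$ viewed as subobjects of $X \plus Y$, whose join in \C\ is precisely the image of $\sigma$, so the coincidence of joins forces $\sigma$ to be a regular epimorphism. (Only a notational slip: the paper's $l_X, l_Y$ denote the maps into $X \farf Y$, not the coproduct injections into $X \plus Y$, but your intended subobjects and the joint strong epimorphicity you invoke are the right ones.)
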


\begin{proof}
The joins of $X$ and $Y$ in \C\ and in \dense\ are obtained by means of the (regular epi, mono) factorization of the arrows $[m,n] \colon X+Y \to A$ and $[m,n]_\PX \colon X \plus Y \to A$ respectively. If \CS\ holds, then the two factorizations yield the same subobject of $A$.

Conversely, let us consider $X$ and $Y$ as subobjects of $X \plus Y$ in \dense. If their join in \C\ coincides with $X \plus Y$, then $\sigma \colon X+Y \to X \plus Y$ is a regular epimorphism.
\end{proof}

\subsection{Construction of the Peiffer product and Peiffer commutator under \CS}

The condition \CS\ on \C\ says that $\sigma\colon X+Y \to X\plus Y$ is a regular epimorphism. Hence, considering the following commutative diagram where the two rows are short exact sequences:
$$ \xymatrix{
    X \twcl Y  \ar@{ |>->}[r] \ar@{->>}[d] & X+Y \ar@{->>}[r]^-{[j,i]} \ar@{->>}[d]_-{\sigma}
        &  X \rtimes Y \ar[d]^1 \\
    X \twcpxl Y \ar@{ |>->}[r] & X \plus Y \ar@{->>}[r]_-{[j,i]_{\PX}} & X \rtimes Y
} $$
the leftmost vertical arrow is a regular epimorphism, since the left hand square is a pullback. In a symmetric way, we obtain a canonical regular epimorphism by replacing $X \rtimes Y$ with $Y \rtimes X$ in the diagram above:
$$ \xymatrix{X \twcr Y \ar@{->>}[r] & X \twcpxr Y} $$
Let us denote $X \twc Y := (X \twcl Y) \vee (X \twcr Y)$ (computed
in $X+Y$). By Corollary \ref{cor:joinPX.ker}, it follows that also
$X \twcpx Y $ can be obtained as the join $ (X \twcpxl Y) \vee (X
\twcpxr Y)$ in \C. Then we have a regular epimorphism:
$$ \xymatrix{X \twc Y \ar@{->>}[r] & X \twcpx Y} $$

\begin{Proposition} \label{prop:peiff.prod.ac}
The Peiffer product $X \farf Y$, introduced in Definition
\ref{def:peiff.prod} as the quotient $\frac{X \plus Y}{X \twcpx
Y}$ in \dense, under \CS\ can be obtained as a quotient
$\frac{X+Y}{X \twc Y}$ of the coproduct in \C.
\end{Proposition}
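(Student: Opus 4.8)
The plan is to realize $X \farf Y$ directly as a quotient of the \C-coproduct $X+Y$ by a single regular epimorphism, and then to identify the kernel of that epimorphism with $X \twc Y$. Recall from Definition \ref{def:peiff.prod} that the canonical map $q \colon X \plus Y \to X \farf Y$ is the diagonal of a pushout of regular epimorphisms, hence a regular epimorphism whose kernel is $X \twcpx Y$ by Proposition \ref{prop:join.norm}; moreover this pushout is computed as in \C\ (Section \ref{sec:colimits}), so $q$ is a regular epimorphism in \C\ and $X \farf Y = \frac{X \plus Y}{X \twcpx Y}$. First I would form the composite
$$ \pi \colon \xymatrix{X+Y \ar[r]^-{\sigma} & X \plus Y \ar[r]^-{q} & X \farf Y} \,. $$
Under \CS\ the comparison $\sigma$ is a regular epimorphism in \C, so $\pi$, being a composite of regular epimorphisms, is a regular epimorphism in \C.

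Since in a semi-abelian category every regular epimorphism is the cokernel of its kernel, it only remains to prove that $\ker(\pi) = X \twc Y$. Here $\ker(\pi) = \sigma^{-1}(\ker q) = \sigma^{-1}(X \twcpx Y)$, so the whole statement reduces to the identity $\sigma^{-1}(X \twcpx Y) = X \twc Y$. The two pullback squares displayed just before the statement give $X \twcl Y = \sigma^{-1}(X \twcpxl Y)$ and $X \twcr Y = \sigma^{-1}(X \twcpxr Y)$, while Corollary \ref{cor:joinPX.ker} identifies $X \twcpx Y$ with the join $(X \twcpxl Y) \vee (X \twcpxr Y)$ computed in \C.

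The key step, which I also expect to be the only real obstacle, is to commute the preimage $\sigma^{-1}$ past this join. This is the assertion that preimage along the regular epimorphism $\sigma$ preserves joins of normal subobjects, and it follows from the fact that, in a semi-abelian category, $\sigma^{-1}$ restricts to an order isomorphism between the normal subobjects of $X \plus Y$ and the normal subobjects of $X+Y$ that contain $\ker(\sigma)$. Concretely, I would check that $X \twc Y = (X \twcl Y) \vee (X \twcr Y)$ is normal in $X+Y$ (a join of kernels), that it contains $\ker(\sigma)$ (already $X \twcl Y = \sigma^{-1}(X \twcpxl Y) \supseteq \ker(\sigma)$), and that its direct image under $\sigma$ is $(X \twcpxl Y) \vee (X \twcpxr Y) = X \twcpx Y$, this image being exactly the regular epimorphism $X \twc Y \to X \twcpx Y$ recorded before the statement. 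Since a normal subobject lying above $\ker(\sigma)$ is determined by its image, this forces $X \twc Y = \sigma^{-1}(X \twcpx Y) = \ker(\pi)$, and therefore $X \farf Y = \frac{X+Y}{X \twc Y}$, the pre-crossed module structure being transported along the isomorphism induced by $\pi$.
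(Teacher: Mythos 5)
Your proof is correct, but it takes a genuinely different route from the paper's. The paper's proof is a short colimit argument: since $\sigma$ is an epimorphism, cocones under the span $([j,i]_\PX\cdot\sigma,\,[i,j]_\PX\cdot\sigma)$ coincide with cocones under $([j,i]_\PX,\,[i,j]_\PX)$, so pasting $\sigma$ onto the defining pushout of $X \farf Y$ (which is also a pushout in \C, being a pushout of regular epimorphisms in \dense) exhibits the outer square, with vertex $X+Y$ and legs $[j,i]$, $[i,j]$, as a pushout in \C; the identification of the kernel of its diagonal with $X \twc Y = (X \twcl Y)\vee(X \twcr Y)$ is then exactly the \C-version of Proposition \ref{prop:join.norm} (Corollary 4.3.15 in \cite{Borceux-Bourn}), since $[j,i]$ and $[i,j]$ are the cokernels of $X \twcl Y$ and $X \twcr Y$. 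You never form this pushout in \C: instead you compute the kernel of the composite $\pi = q\cdot\sigma$ directly, reducing the statement to $\sigma^{-1}\bigl((X \twcpxl Y)\vee(X \twcpxr Y)\bigr) = \sigma^{-1}(X \twcpxl Y)\vee\sigma^{-1}(X \twcpxr Y)$, which you justify by the lattice (Noether) isomorphism theorem for normal subobjects above $\ker(\sigma)$ --- a standard fact in semi-abelian categories (direct and inverse image along a regular epimorphism are inverse bijections between normal subobjects of the codomain and normal subobjects of the domain containing the kernel), valid but external to the paper, and your verification of its hypotheses (normality of $X \twc Y$ as a join of kernels, $\ker(\sigma)\leq X \twcl Y$, and $\sigma(X \twc Y)=X \twcpx Y$) is sound. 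What the paper's route buys is brevity and the presentation of $X \farf Y$ as a pushout in \C\ of the two semidirect-product quotients, which is precisely what feeds the colimit description of Remark \ref{rem:w}; what your route buys is an explicit kernel-level identification $\ker(\pi) = X \twc Y$ that avoids any pushout-cancellation argument. Both proofs rest on the same preliminary observations from the paper: the pullback squares giving $X \twcl Y = \sigma^{-1}(X \twcpxl Y)$ and Corollary \ref{cor:joinPX.ker}.
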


\begin{proof}
Consider the following diagram, where the inner square is constructed as a pushout in \dense:
$$ \xymatrix@C=9ex{
    X+Y \ar@/^/[drr]^{[j,i]} \ar@/_/[ddr]_{[i,j]} \ar@{->>}[dr]^{\sigma} \\
    & X \plus Y \ar[r]_-{[j,i]_\PX} \ar[d]^{[i,j]_\PX} & X \rtimes Y \ar[d] \\
    & Y \rtimes X \ar[r] & X \farf Y
} $$
Since $\sigma$ is a regular epimorphism and the inner square is also a pushout in \C, so is the outer square. \end{proof}

\begin{Remark} \label{rem:w}
In this case, the Peiffer product $X \farf Y$ can also be computed
as the colimit of the following diagram:
$$ \xymatrix{
    & Y \flat X \ar[dl]_{\delta_Y^*\xi_X} \ar[dr]^{\kappa_{_{Y,X}}}
        & & X \flat Y \ar[dr]^{\delta_X^*\xi_Y} \ar[dl]_{\kappa_{_{X,Y}}} \\
    X & & X+Y & & Y
} $$
where $\kappa_{_{Y,X}}$ and $\kappa_{_{X,Y}}$ represent the formal conjugates in $X+Y$ of\, $Y$ on $X$ and of \,$X$ on $Y$. Indeed,
in order to obtain this colimit, we can take first the pushout of the span on the left  producing the semi-direct product $X \rtimes Y$ (see \cite{Janelidze}),
 i.e.\ the universal quotient of $X+Y$ where the action ${\delta_Y^*\xi_X}$ becomes a conjugation. In a symmetric way, we obtain $Y \rtimes X$ and  by a final pushout, we get the desired colimit, which coincides then with $X \farf Y$, thanks to Proposition \ref{prop:peiff.prod.ac}. As a consequence, $X \farf Y$ is the universal quotient of $X+Y$ where both the actions ${\delta_Y^*\xi_X}$ and ${\delta_X^*\xi_Y}$ become conjugations.
\end{Remark}

\begin{Proposition}
Let $X$ and $Y$ be pre-crossed submodules of $A$ as in diagram
(\ref{diag:mn}). Under \CS\ the Peiffer commutator $\langle X,Y
\rangle$ can be obtained as the regular image of $X \twc Y$
through the arrow $[m,n] \colon X + Y \to A$:
$$ \xymatrix@C=7ex{
    X \twc Y \ar@{ |>->}[d] \ar@{->>}[r] & X \twcpx Y \ar@{ |>->}[d] \ar@{->>}[r]
        & \langle X,Y \rangle \ar@{ >->}[d] \\
    X+Y \ar@{->>}[r]^-{\sigma} \ar@/_1pc/[rr]_-{[m,n]} & X \plus Y \ar[r]^-{[m,n]_\PX} & A
} $$
\end{Proposition}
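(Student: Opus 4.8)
The plan is to exploit the commutative diagram displayed in the statement itself and to reduce everything to the original Definition~\ref{def:Peiff.comm} of the Peiffer commutator together with the comparison morphisms already constructed under \CS. The starting point is the observation, made just before this proposition in Section~\ref{sec:alg.coh}, that $\sigma\colon X+Y \to X \plus Y$ is a regular epimorphism and that there is a canonical regular epimorphism $X \twc Y \twoheadrightarrow X \twcpx Y$ obtained as the join (computed in $X+Y$) of the two regular epimorphisms $X \twcl Y \twoheadrightarrow X \twcpxl Y$ and $X \twcr Y \twoheadrightarrow X \twcpxr Y$. By Corollary~\ref{cor:joinPX.ker}, this join correctly computes $X \twcpx Y$, so the top-left horizontal arrow in the diagram is a genuine regular epimorphism between the correct objects.

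First I would recall that, by Definition~\ref{def:Peiff.comm}, the Peiffer commutator $\langle X,Y\rangle$ is the regular image of $X \twcpx Y$ along $[m,n]_\PX\colon X \plus Y \to A$; that is, the right-hand square of the displayed diagram, with its (regular epi, mono) factorization, is exactly diagram~(\ref{diag:peiff.comm}). What remains is to identify the \emph{composite} top row $X \twc Y \twoheadrightarrow X \twcpx Y \twoheadrightarrow \langle X,Y\rangle$ with the regular image of $X \twc Y$ along the composite $[m,n]\colon X+Y \to A$. The key compatibility is the triangle identity $[m,n]_\PX \cdot \sigma = [m,n]$, which holds by Remark~\ref{rem:sigma.epi} (or directly from the universal property defining $\sigma$): the two composites agree after precomposition with $\sigma$, hence agree. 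Thus the left-hand and middle vertical inclusions, together with $\sigma$ and $[m,n]_\PX$, make the whole lower part of the diagram commute over $[m,n]$.

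The heart of the argument is then a standard fact about regular images in a regular category: if $X \twc Y \twoheadrightarrow X \twcpx Y$ is a regular epimorphism and $X \twcpx Y \rightarrowtail X \plus Y$ is the monomorphism exhibiting it as a normal subobject, then the image of $X \twc Y$ along $[m,n] = [m,n]_\PX \cdot \sigma$ coincides with the image of $X \twcpx Y$ along $[m,n]_\PX$. Concretely, I would form the (regular epi, mono) factorization of $[m,n]$ restricted to $X \twc Y$ and compare it with the factorization of $[m,n]_\PX$ restricted to $X \twcpx Y$; because regular epimorphisms compose and the image is stable under precomposition with a regular epimorphism, both factorizations produce the same subobject of $A$. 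This is the step where regularity of \C\ does the real work, and I expect it to be the only substantive point, since the rest is diagram chasing.

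The main (mild) obstacle is bookkeeping: one must be careful that the subobject $X \twcpx Y \rightarrowtail X \plus Y$ used here is literally the kernel appearing in Definition~\ref{def:peiff.prod}, and that the regular epimorphism $X \twc Y \twoheadrightarrow X \twcpx Y$ is compatible with the two inclusions into $X+Y$ and $X \plus Y$ respectively — that is, the left square of the diagram commutes. This commutativity follows because the comparison arrow on kernels is induced by $\sigma$ via the pullback squares defining $X \twcl Y$ and $X \twcr Y$ (displayed earlier in the subsection), so it is automatically compatible with the horizontal inclusions. Once this is in place, the conclusion that $\langle X,Y\rangle$ is the regular image of $X \twc Y$ along $[m,n]$ is immediate from the factorization properties, and the proof reduces to citing Definition~\ref{def:Peiff.comm} and the identity $[m,n]_\PX\cdot\sigma=[m,n]$.
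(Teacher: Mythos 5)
Your proposal is correct and matches the paper's (implicitly given) argument: the paper states this proposition without a separate proof, the displayed diagram being exactly your argument — the composite $X \twc Y \twoheadrightarrow X \twcpx Y \twoheadrightarrow \langle X,Y \rangle$ of regular epimorphisms, together with Definition~\ref{def:Peiff.comm} and the identity $[m,n]_\PX\cdot\sigma=[m,n]$, exhibits the (regular epi, mono) factorization of $[m,n]$ restricted to $X \twc Y$. The only trivial quibble is that the identity $[m,n]_\PX\cdot\sigma=[m,n]$ follows from the universal property of the coproduct $X+Y$ (your parenthetical justification), not from Remark~\ref{rem:sigma.epi}, which concerns right-cancellability of $\sigma$.
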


\begin{Remark}
In the case $[m,n]$ is a regular epimorphism, then $\langle X,Y \rangle$ is normal in $A$ if considered in \C. But since algebraic coherence implies strong protomodularity (see \cite{CGrayVdL2}), thanks to Proposition \ref{prop:normal}, it becomes normal also in \dense.
\end{Remark}

\begin{Remark}
In case of trivial pre-crossed modules, $X \twc Y$ coincides with the canonical object $X\, {\Diamond} \,Y$ of \cite{MaMe10-2} and the Peiffer commutator $\langle X,Y \rangle$ coincides with the Higgins commutator $[X,Y]$ introduced in the case of $\Omega$-groups in \cite{Higgins} and in a categorical setting in \cite{MaMe10-2}.
\end{Remark}

\begin{Remark}
In the present setting the reflection of a pre-crossed module
$(\delta\colon X \to B,\xi)$ described in Section
\ref{sec:reflection} can be computed by means of the
following pushout in \C:
$$ \xymatrix{
    X + X \ar@{-|>}[r]^-{[1,1]} \ar@{-|>}[d] & X \ar@{-|>}[d]^-\eta \\
    X \farf X \ar@{-|>}[r]_q & \dfrac{X}{\langle X,X \rangle}
} $$
\end{Remark}

\subsection{Examples}

As an application of the constructions described above, we provide here some examples of explicit calculation of the Peiffer commutator for algebraically coherent varieties. In this context, following the description given in Remark \ref{rem:w}, we can interpret $\langle X,Y \rangle$ as the ideal generated in $X\vee Y$ by the Peiffer words, i.e.\ those elements of $A$ whose vanishing  makes the actions ${\delta_Y^*\xi_X}$ and ${\delta_X^*\xi_Y}$ become conjugations.

\begin{Example}
Let us consider first the case of (not necessarily unitary) rings. Following the notation of diagram (\ref{diag:mn}), let $(\delta\colon A \to B,\xi)$ be a pre-crossed  module in the category of rings. The action $\xi$ is given by the assignment of two bilinear maps:
$$ \begin{array}{l}
    B \times A \rightarrow A, \qquad (b,a) \mapsto b \cdot a\,, \\
    A \times B \rightarrow A, \qquad (a,b) \mapsto a \cdot b\,,
\end{array} $$
satisfying the following identities (for all $a,a'\in A$ and $b,b'\in B$):
$$ \begin{array}{ll}
    (bb') \cdot a = b \cdot (b' \cdot a)\,, & (b \cdot a) a' = b \cdot (aa')\,, \\
    (b \cdot a) \cdot b' = b \cdot (a \cdot b')\,, & (a \cdot b) a' = a (b \cdot a')\,, \\
    (a \cdot b) \cdot b' = a \cdot (bb')\,, & (aa') \cdot b = a (a' \cdot b) \,.
\end{array} $$
The pre-crossed module condition says that:
$$ \delta(b \cdot a) = b \cdot \delta(a) \qquad \mbox{and} \qquad \delta(a \cdot b) = \delta(a) \cdot b. $$
Given $X$ and $Y$ pre-crossed submodules of $A$ as in diagram (\ref{diag:mn}), their Peiffer commutator $\langle X, Y \rangle \leq A$ is the ideal of $X \vee Y$ generated by the following Peiffer words (for all $x\in X$ and $y\in Y$):
$$ xy-\delta_X(x) \cdot y\,,\quad xy-x \cdot \delta_Y(y)\,,\quad yx-\delta_Y(y)\cdot x\,,\quad yx-y \cdot \delta_X(x)\,. $$
\end{Example}

\begin{Example}
Consider now the category of Leibniz algebras over a fixed field. As above, $\delta$ is a pre-crossed module. Here, the action $\xi$ is a pair of bilinear maps:
$$ \begin{array}{l}
    B \times A \rightarrow A, \qquad (b,a) \mapsto [b , a]\,, \\
    A \times B \rightarrow A, \qquad (a,b) \mapsto [a,b]\,,
\end{array} $$
satisfying the following identities (for all $a,a'\in A$ and $b,b'\in B$):
$$ \begin{array}{ll}
    \left[[a,a'],b\right]=\left[[a,b],a'\right]+\left[a,[a',b]\right]
        & \left[[a,b],b'\right]=\left[[a,b'],b\right]+\left[a,[b,b']\right] \\
    \left[[a,b],a'\right]=\left[[a,a'],b\right]+\left[a,[b,a']\right]
        & \left[[b,a],b'\right]=\left[[b,b'],a\right]+\left[b,[a,b']\right] \\
    \left[[b,a],a'\right]=\left[[b,a'],a\right]+\left[b,[a,a']\right]
        & \left[[b,b'],a\right]=\left[[b,a],b'\right]+\left[b,[b',a]\right]
\end{array} $$
and the pre-crossed module condition says that:
$$ \delta([b, a]) = [b, \delta(a)] \qquad \mbox{and} \qquad \delta([a, b]) = [\delta(a), b] \,. $$
Given $X$ and $Y$ pre-crossed submodules of $A$ as in diagram (\ref{diag:mn}), their Peiffer commutator $\langle X, Y \rangle \leq A$ is the ideal of $X \vee Y$ generated by the following Peiffer words (for all $x\in X$ and $y\in Y$):
$$ [x,y]-[\delta_X(x),y]\,,\quad [x,y]-[x,\delta_Y(y)]\,,\quad [y,x]-[\delta_Y(y),x]\,,\quad [y,x]-[y,\delta_X(x)]\,. $$
\end{Example}


\section{Coproduct of crossed modules} \label{sec:coproduct}

Brown showed in \cite{BrownCCM} that the Peiffer product of two crossed $B$-modules of groups represents their coproduct in $\XMod_B(\Gp)$. We show here that an internal version of this result holds in any algebraically coherent semi-abelian category satisfying the following condition.

\begin{itemize}
    \item [(UA)] Given a jointly strongly epimorphic cospan $\xymatrix{A \ar[r]^f & B & C \ar[l]_g}$ in \C, then for any 4-tuple $(\xi_1,\xi_2,\xi_3,\xi_4)$ of actions on a fixed object $X$ making the following diagram commute
\begin{equation} \label{diag:C}
\begin{aligned}
\xymatrix{
    A \flat X \ar[dr]_{\xi_1} \ar[r]^{f\flat 1} & B\flat X \ar@<-.5ex>[d]_(.4){\xi_3} \ar@<.5ex>[d]^(.4){\xi_4}
        & C \flat X \ar[dl]^{\xi_2} \ar[l]_{g\flat 1} \\
    & X
}
\end{aligned}
\end{equation}
we have $\xi_3=\xi_4$.
\end{itemize}

\begin{Proposition}
Let \C\ be an action representative semi-abelian category (see
\cite{BJK,BB2} or a \emph{category of interest}. Then \C\
satisfies the condition \textnormal{(UA)} above.
\end{Proposition}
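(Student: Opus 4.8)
The plan is to verify condition (UA) separately for the two classes of categories mentioned in the statement, exploiting the structural description of actions that each hypothesis provides. The key observation is that (UA) asks for a uniqueness statement: given a jointly strongly epimorphic cospan $\xymatrix{A \ar[r]^f & B & C \ar[l]_g}$ and four compatible actions $(\xi_1,\xi_2,\xi_3,\xi_4)$ on a fixed $X$ fitting into diagram (\ref{diag:C}), the two actions $\xi_3,\xi_4$ of $B$ on $X$ must coincide. Since $\xi_3$ and $\xi_4$ agree after precomposition with $f \flat 1$ (both equal $\xi_1$) and with $g \flat 1$ (both equal $\xi_2$), the whole problem reduces to showing that the pair $(f\flat 1, g\flat 1)\colon A\flat X , C\flat X \to B \flat X$ is \emph{jointly epimorphic} in an appropriate sense, so that equality after precomposition forces $\xi_3=\xi_4$.

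For the \textbf{action representative} case, I would use the defining property: the category of actions on $X$ is representable, meaning actions of $B$ on $X$ correspond bijectively (and naturally in $B$) to morphisms $B \to [X]$ into a fixed object $[X]$ (the actor, or split extension classifier) of \C. Under this correspondence, the four actions $\xi_1,\xi_2,\xi_3,\xi_4$ become morphisms $\alpha\colon A \to [X]$, $\gamma\colon C \to [X]$, and two morphisms $\beta_3,\beta_4\colon B \to [X]$; the commutativity of (\ref{diag:C}) translates into $\beta_3 f = \alpha = \beta_4 f$ and $\beta_3 g = \gamma = \beta_4 g$. Since $(f,g)$ is jointly strongly epimorphic, it is in particular jointly epimorphic, so $\beta_3 f = \beta_4 f$ and $\beta_3 g = \beta_4 g$ together give $\beta_3 = \beta_4$, whence $\xi_3=\xi_4$. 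This case is essentially immediate once the representability is invoked.

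For the \textbf{category of interest} case, I would argue directly in terms of generators and the algebraic description of actions. In a category of interest in the sense of Orzech, an action of $B$ on $X$ is determined by a set of bilinear (more precisely, biadditive and suitably multilinear) operations $b * x$ subject to a fixed list of axioms, and a morphism of actions amounts to compatibility of these operations. The crucial point is that the relevant cosmash / Higgins-type object $B \flat X$ is, in the algebraically coherent setting, generated by the formal products of elements of $B$ and elements of $X$. Given that $(f,g)$ is jointly strongly epimorphic, every element of $B$ lies in the subobject generated by the images of $f$ and $g$; I would then show that each generator $b * x$ of $B \flat X$ is a product of generators coming from $A \flat X$ (via $f \flat 1$) and $C \flat X$ (via $g \flat 1$), so that $(f\flat 1, g\flat 1)$ is jointly strongly epimorphic. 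Here algebraic coherence is exactly what guarantees that $B\flat -$ preserves jointly strongly epimorphic pairs (the kernel-functor formulation of \CS). Consequently $\xi_3$ and $\xi_4$, agreeing on a jointly strongly epimorphic pair of domains, must be equal.

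The main obstacle I anticipate is the category-of-interest case, specifically making rigorous the passage from ``$(f,g)$ jointly strongly epimorphic'' to ``$(f \flat 1, g \flat 1)$ jointly strongly epimorphic.'' The cheap version of this is exactly algebraic coherence, which by Proposition \ref{prop:ac.cs} is equivalent to \CS\ and states that the kernel functor $\Ker\colon \Pt_B(\C) \to \C$ preserves jointly strongly epimorphic pairs; since every category of interest is algebraically coherent (as recalled after Proposition \ref{prop:ac.cs}), the functor $B \flat -$ inherits the required preservation property, and the result follows by the joint epimorphicity argument above. The delicate bookkeeping is in checking that the identifications of the four actions on the shared object $X$ are genuinely compatible on the overlap dictated by the cospan, so that one is entitled to conclude $\xi_3 = \xi_4$ rather than merely agreement on separate pieces; I would handle this by noting that the two triangles in (\ref{diag:C}) express precisely the precomposition equalities needed.
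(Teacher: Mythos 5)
Your action-representative half is correct and coincides with the paper's own argument: transport the four actions along the bijection between actions on $X$ and morphisms into $\Act(X)$, then cancel $\phi_3=\phi_4$ using that $(f,g)$ is jointly epimorphic. The gap is in the category-of-interest half, and it sits exactly where you anticipated trouble --- but the problem is worse than ``delicate bookkeeping'': the step you reduce everything to is \emph{false}. The pair $(f\flat 1_X, g\flat 1_X)$ need not be jointly epimorphic when $(f,g)$ is jointly strongly epimorphic, even in the category of groups, which is algebraically coherent and in fact \LACC. Take $A=\langle a\rangle\cong\mathbb{Z}$, $C=\langle c\rangle\cong\mathbb{Z}$, $B$ the free group on $\{a,c\}$ with $f,g$ the two inclusions, and $X=\langle x\rangle\cong\mathbb{Z}$. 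Then $B\flat X=\Ker(B+X\to B)$ is free on the basis $\{wxw^{-1}:w\in B\}$, while the images of $A\flat X$ and $C\flat X$ generate only the subgroup free on $\{a^nxa^{-n}\}\cup\{c^nxc^{-n}\}$; the basis element $(ac)x(ac)^{-1}$ lies outside it, so the images do not generate and the pair is not jointly epimorphic. Note also a conflation of the two variables of $\flat$: what algebraic coherence (equivalently \CS) gives is that each functor $B\flat-$ preserves jointly strongly epimorphic pairs in its \emph{second} variable (this is what the paper uses in Section \ref{sec:coproduct}), whereas your argument needs preservation in the \emph{first} variable, $-\flat X$, which the example above refutes.

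The reason \textnormal{(UA)} nevertheless holds is that $\xi_3,\xi_4$ are not arbitrary morphisms $B\flat X\to X$ but \emph{actions}: it is the action axioms that propagate agreement on the images of $A$ and $C$ to all of $B$ (in groups, for instance, actions of $B$ on $X$ are homomorphisms $B\to\mathrm{Aut}(X)$, so they are determined by their values on generators of $B$, even though morphisms out of $B\flat X$ are not determined by their restrictions to the images of $A\flat X$ and $C\flat X$). Any correct proof must therefore use a representability-type device for actions rather than an epimorphicity property of $\flat$ in the first variable. This is exactly what the paper does for categories of interest: although such a category \C\ need not be action representative, by Casas--Datuashvili--Ladra every action of an object of \C\ on $X$ corresponds to a morphism into the universal strict general actor $\USGA(X)$, computed in an ambient variety $\C_G$ of groups with operations. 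The hypotheses of \textnormal{(UA)} then become $\phi_3 f=\phi_1=\phi_4 f$ and $\phi_3 g=\phi_2=\phi_4 g$ for morphisms $\phi_i\colon B\to\USGA(X)$ in $\C_G$; since $(f,g)$ remains jointly (strongly) epimorphic in $\C_G$ (the images generate $B$ there as well), cancellation gives $\phi_3=\phi_4$ and hence $\xi_3=\xi_4$. Your second half should be replaced by this argument; as it stands, it has no valid route to the conclusion.
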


\begin{proof}
In an action representative semi-abelian category \C, for every
$X$ in \C, there exists an object $\Act(X)$ (the ``actor'' of
$X$), such that the actions of any object $Y$ on $X$ are in
one-to-one correspondence with the morphisms ${Y \to \Act(X)}$.
Through this correspondence, naming $\phi_i$ the morphism
associated with the action $\xi_i$ in diagram (\ref{diag:C})
above, we get the following commutative diagram:
$$ \xymatrix{
    A \ar[dr]_{\phi_1} \ar[r]^{f} & B \ar@<-.5ex>[d]_(.4){\phi_3} \ar@<.5ex>[d]^(.4){\phi_4}
        & C \ar[dl]^{\phi_2} \ar[l]_{g} \\
    & \Act(X)
} $$
Hence condition \textnormal{(UA)} follows from the fact that $f$ and $g$ are jointly (strongly) epimorphic.

A similar phenomenon occurs in any \emph{category of interest} \C,
that might not be action representative. Nevertheless, it is shown
in \cite{Casas-Datuashvili-Ladra} that, viewing \C\ as a
subvariety of a variety $\C_G$ of groups with operations, for
every object $X$ in \C\ there exists an object $\USGA(X)$ in
$\C_G$ (the ``universal strict general actor'' of $X$), such that
every action of an object $Y$ in \C\ on $X$ yields a morphism $Y
\to \USGA(X)$ in $\C_G$. Then, as above, we get a commutative
diagram in $\C_G$:
$$ \xymatrix{
    A \ar[dr]_{\phi_1} \ar[r]^{f} & B \ar@<-.5ex>[d]_(.4){\phi_3} \ar@<.5ex>[d]^(.4){\phi_4}
        & C \ar[dl]^{\phi_2} \ar[l]_{g} \\
    & \USGA(X)
} $$ The pair $(f,g)$ is jointly strongly epimorphic in $\C_G$
since the same holds in the subvariety \C, and condition (UA)
follows by cancellation.
\end{proof}

Thanks to the previous proposition, the following theorem applies also to the cases of crossed modules of Lie algebras, Leibniz algebras and commutative algebras studied in \cite{CL2000, Aslan, Shammu}.

\begin{Theorem}
Let \C\ be an algebraically coherent semi-abelian category satisfying the condition \textnormal{(UA)}, $(\partial_X \colon X\to B,\xi_X)$ and $(\partial_Y \colon Y \to B,\xi_Y)$ internal crossed modules in \C. Then $(\partial_{\farf}\colon X \farf Y \to B,\xi_{\farf})$, constructed as in Proposition \ref{prop:peiff.prod.ac}, is the coproduct of $\partial_X$ and $\partial_Y$ in $\XMod_B(\C)$.
\end{Theorem}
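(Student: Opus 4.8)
The plan is to prove the theorem in two stages: first that the Peiffer product $X \farf Y$ is genuinely a crossed module, and then that it enjoys the universal property of the coproduct in $\XMod_B(\C)$. The first stage is where condition \textnormal{(UA)} and algebraic coherence do the real work; the second is essentially formal, resting on the reflection described in Section~\ref{sec:reflection}.

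For the first stage, recall that algebraic coherence implies \SH, so by Proposition~\ref{prop:char.xmod} it suffices to verify the Peiffer condition for $A := X \farf Y$, i.e.\ that the two actions of $A$ on itself --- conjugation $\chi$ and the action $\xi_\farf\cdot(\partial_\farf\flat 1)$ obtained through $\partial_\farf$ --- coincide as arrows $A \flat A \to A$. The pair $(l_X,l_Y)$ is jointly strongly epimorphic in \C, being the composite of the coproduct injections with the regular epimorphism $X+Y \to X\farf Y$ of Proposition~\ref{prop:peiff.prod.ac}. I would therefore apply \textnormal{(UA)} to the jointly strongly epimorphic cospan $X \xrightarrow{l_X} A \xleftarrow{l_Y} Y$, acting on the object $A$, with the two candidate actions $\chi$ and $\xi_\farf\cdot(\partial_\farf\flat 1)$: their equality will follow once I show that they restrict to a common action of $X$ on $A$ along $l_X$, and to a common action of $Y$ on $A$ along $l_Y$.

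To compare the two restrictions along $l_X$, namely $\chi\cdot(l_X\flat 1)$ and $\xi_\farf\cdot(\partial_X\flat 1)$ as arrows $X\flat A \to A$, I would reduce the \emph{acted-upon} object in the same spirit. Since $X\flat-$ is computed as $\Ker$ of $X+-$, algebraic coherence guarantees that $X\flat-$ preserves jointly strongly epimorphic pairs, so that $(1\flat l_X,\,1\flat l_Y)$ is jointly strongly epimorphic with codomain $X\flat A$. Hence it is enough to compare the two arrows after precomposition with $1\flat l_X$ and with $1\flat l_Y$. On $X\flat X$ they agree because $X$ is a crossed module (its own Peiffer identity $\chi_X=\partial_X^*\xi_X$), while on $X\flat Y$ they agree precisely because, by construction of the Peiffer product (Remark~\ref{rem:w}), the action $\delta_X^*\xi_Y$ of $X$ on $Y$ has become a conjugation in $A$. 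A symmetric argument treats the restriction along $l_Y$, and \textnormal{(UA)} then yields $\chi=\xi_\farf\cdot(\partial_\farf\flat 1)$, so that $X\farf Y$ is a crossed module. For the universal property, the coproduct of $X$ and $Y$ in $\dense$ is $X\plus Y$; given a crossed module $(\partial_Z\colon Z\to B,\xi_Z)$ and morphisms $u\colon X\to Z$, $v\colon Y\to Z$ in $\XMod_B(\C)$, the universal property of $X\plus Y$ produces a unique $\widetilde w\colon X\plus Y\to Z$ in $\dense$ restricting to $u$ and $v$. By Proposition~\ref{prop:peiff.fact}, $\widetilde w$ carries $X\twcpx Y=\langle X,Y\rangle$ into $\langle u(X),v(Y)\rangle$, which vanishes since $Z$ is a crossed module (Lemma~\ref{lem:xmod=>peiff=0}); thus $\widetilde w$ factors through the defining quotient $X\plus Y\to X\farf Y$, giving a unique $w\colon X\farf Y\to Z$ with $w\cdot l_X=u$ and $w\cdot l_Y=v$, uniqueness following from $(l_X,l_Y)$ being jointly epimorphic. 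As $X\farf Y$ is a crossed module and $\XMod_B(\C)$ is full in $\dense$, this exhibits $X\farf Y$ as the required coproduct.

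The main obstacle is the first stage: the Peiffer identity for $A$ is a condition on $A\flat A$, and the difficulty is to descend it to the four ``corner'' pieces $X\flat X$, $X\flat Y$, $Y\flat X$, $Y\flat Y$. This descent must be organised along both variables simultaneously --- \textnormal{(UA)} reducing the acting object from $A$ to the generators $X,Y$, and the preservation of joint strong epimorphy by $X\flat-$ and $Y\flat-$ reducing the acted-upon object --- and one must check carefully that each restricted arrow is genuinely an internal action and that the relevant pairs are jointly strongly epimorphic in \C\ rather than merely in \dense.
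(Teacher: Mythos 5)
Your proposal is correct and, in its substantive first stage, is essentially the paper's own proof: the Peiffer condition for $X \farf Y$ is checked by reducing along the jointly strongly epimorphic pair $(l_X,l_Y)$ --- condition (UA) in the acting variable, preservation of jointly strongly epimorphic pairs by $X\flat -$ (algebraic coherence) in the acted variable --- and the four corners are settled exactly as in the paper ($X\flat X$ via the Peiffer identity of $X$ and equivariance of $l_X$, $X\flat Y$ because $X \farf Y$ is a quotient of $Y \rtimes X$, i.e.\ Remark \ref{rem:w}). Your second stage, deducing the universal property from Proposition \ref{prop:peiff.fact} and Lemma \ref{lem:xmod=>peiff=0}, is a correct filling-in of what the paper leaves implicit in its opening claim that it ``suffices to verify the Peiffer identity''; the only quibble is that the diagrammatic Peiffer condition you verify is the characterization from \cite{MFVdL} recalled at the start of Section \ref{sec:reflection}, rather than Proposition \ref{prop:char.xmod} (which is phrased as $\langle A,A \rangle = 0$) --- harmless, since under \SH\ both are equivalent to being a crossed module.
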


\begin{proof}
By definition, $(\partial_{\farf},\xi_{\farf})$ is a pre-crossed module. Since in the context we are considering the property \SH\ holds (see \cite{CGrayVdL2}), it suffices to verify the Peiffer identity for $X \farf Y$:
$$ \xymatrix{
    (X \farf Y)\flat(X \farf Y) \ar[rr]^{\partial_{\farf}\flat 1} \ar[dr]_{\chi}
        & & B\flat(X \farf Y) \ar[dl]^{\xi_{\farf}} \\
    & X \farf Y
} $$
In other words, we have to show that the following two actions are equal:
$$ \xymatrix{
    (X \farf Y)\flat(X \farf Y) \ar@<.5ex>[rr]^-{\partial_{\farf}^*\xi_{\farf}} \ar@<-.5ex>[rr]_-{\chi}
        & & X \farf Y \,.
} $$
To prove this, we can pre-compose with the canonical injections $l_X$ and $l_Y$ to obtain the following diagram:
\begin{equation} \label{diag:peiff.farf}
\begin{aligned}
\xymatrix{
    X \flat (X \farf Y) \ar[dr]_{\partial_X^*\xi_{\farf}} \ar[r]^-{l_X\flat 1}
        & (X \farf Y)\flat(X \farf Y) \ar@<.5ex>[d]^(.4){\partial_{\farf}^*\xi_{\farf}} \ar@<-.5ex>[d]_(.4){\chi}
        & Y \flat (X \farf Y) \ar[dl]^{\partial_Y^*\xi_{\farf}} \ar[l]_-{l_Y\flat 1} \\
    & X \farf Y
}
\end{aligned}
\end{equation}
As already observed in Section \ref{sec:definitions}, the pair $(l_X,l_Y)$ is jointly strongly epimorphic in \dense, and then in \C, thanks to condition \CS. Hence, by condition (UA), we only have to prove that diagram (\ref{diag:peiff.farf}) is commutative.

Let us focus on the left hand side triangles. Again by algebraic coherence, the functor $X\flat-$ preserves jointly strongly epimorphic pairs (see \cite{CGrayVdL2}), so the two triangles commute if and only if they commute when pre-composed with the jointly strongly epimorphic pair
$$ \xymatrix{X\flat X \ar[r]^-{1\flat l_X} & X \flat (X \farf Y) & X\flat Y \ar[l]_-{1\flat l_Y} \,.} $$
This is true because, on one hand, $\partial_X$ is a crossed module, then the two squares
$$ \xymatrix{
    X\flat X \ar[d]_{\chi=\partial_X^*\xi_X} \ar[r]^-{l_X\flat l_X}
        & (X \farf Y)\flat(X \farf Y) \ar@<.5ex>[d]^{\partial_{\farf}^*\xi_{\farf}} \ar@<-.5ex>[d]_{\chi} \\
    X \ar[r]_-{l_X} & X \farf Y
} $$
commute because morphisms are equivariant with respect to conjugations and because $l_X$ is a morphism of pre-crossed modules. On the other hand, composing with $1\flat l_Y$ we get the following diagram
$$ \xymatrix{
    X\flat Y \ar[d]_{\partial_X^*\xi_Y} \ar[r]^-{1\flat l_Y} \ar[dr]^(.6){\kappa_{X,Y}}
        & X \flat (X \farf Y) \ar[dr]_{\partial_X^*\xi_{\farf}} \ar[r]^-{l_X\flat 1}
        & (X \farf Y)\flat(X \farf Y) \ar@<.5ex>[d]^{\partial_{\farf}^*\xi_{\farf}} \ar@<-.5ex>[d]_{\chi} \\
    Y \ar[r]_-{\iota_Y} & X+Y \ar[r] & X \farf Y
} $$ where both the outer rectangles are commutative, since the
canonical arrow ${X+Y \to X \farf Y}$ coequalizes the pair
$(\kappa_{X,Y},\iota_Y\cdot\partial_X^*\xi_Y)$, $X \farf Y$ being
a quotient of ${Y \rtimes X}$.

In a symmetric way, one can prove that the right hand side triangles in (\ref{diag:peiff.farf}) commute because $\partial_Y$ is a crossed module and $X \farf Y$ is also a quotient of ${X \rtimes Y}$.
\end{proof}


\end{document}